\theoremstyle{plain}% Theorem-like structures
\newtheorem{theorem}{Theorem}[section]
\newtheorem{lemma}[theorem]{Lemma}
\theoremstyle{definition}
\theoremstyle{remark}
\def\dd{{\mathcal{D}}}
\def\E{{\mathbb{E}}}
\def\F{{\mathcal{F}}}
\def\N{{\mathbb{N}}}
\def\P{{\mathbb{P}}}
\def\R{{\mathbb{R}}}
\def\Y{{\mathbb{Y}}}
\def\Z{{\mathbb{Z}}}
\def\PP{{\mathbb{P}}}
\def\Xa{{\mathcal{X}}}
\newcommand{\s}{\sigma}
\def\RR{\mathbb{R}}
\def\8{\infty}
\def\N{\mathbb{N}}
\def\E{\mathbb{E}}
\def\P{\mathbb{P}}
\def\<{\langle}
\def\>{\rangle}
\newcommand{\jd}{1\slash 2}
\renewcommand{\d}{\delta}
\renewcommand{\a}{\alpha}
\renewcommand{\b}{\beta}
\renewcommand{\l}{\lambda}
\newcommand{\eps}{\varepsilon}
\newcommand{\g}{\gamma}
\newcommand{\supp}{\mathrm{supp}}
\newcommand{\Ind}{\mathbf{1}}
\newtheorem{thm}[equation]{Theorem}
\newtheorem{rem}[equation]{Remark}
\newtheorem{cor}[equation]{Corollary}
\theoremstyle{definition}
\numberwithin{equation}{section}
\begin{document}
	
	%\articletype{ARTICLE TEMPLATE}
	
%	\title{Taylor \& Francis \LaTeX\ template for authors (\textsf{Interact} layout + Reference Style~S)}
\title{Affine stochastic equation with triangular matrices}	
	\author{
		E. Damek and J. Zienkiewicz}
		%\address{ Instytut Matematyczny, Uniwersytet Wroclawski,
		% 50-384 Wroclaw,
		%pl. Grunwaldzki 2/4 Poland.}%;% \\ 
		%\textsuperscript{b}Instytut Matematyczny, Uniwersytet Wroclawski,
		%50-384 Wroclaw,
		%pl. Grunwaldzki 2/4 Poland

\begin{abstract}
We study solution $\Xa $ of the stochastic equation
$$
\Xa = A\Xa +B,$$
where $A$ is a random matrix and $B,X$ are random vectors, the law of $(A,B)$ is given and $X$ is independent of $(A,B)$. The equation is meant in law, the matrix $A$ is $2\times 2$ upper triangular,  $A_{11}=A_{22}>0$, $A_{12}\in \R $. A sharp asymptotics of the tail of $\Xa =(\Xa _1,\Xa _2)$ is obtained. We show that under ``so called'' Kesten-Goldie conditions $\P (\Xa _2>t)\sim t^{-\a}$ and   $\P (\Xa _1>t )\sim t^{-\a}(\log t)^{\tilde \a}$, where $\tilde \a =\a$ or $\a \slash 2$.
\end{abstract}
	\maketitle

%\begin{keywords}

{\it Key words. } Matrix recursion, multivariate affine stochastic equation,  regular behavior at infinity, stationary solution, triangular matrices.  
%\end{keywords}

\bigskip
{\bf 2010 Mathematics Subject Classification.} {Primary 60G10, 60J05, 62M10, Secondary 60B20, 91B84.}

\bigskip
 Instytut Matematyczny, Uniwersytet Wroclawski,
		 50-384 Wroclaw,
		pl. Grunwaldzki 2/4 Poland, edamek@math.uni.wroc.pl 
		
\section{Introduction}
We consider the stochastic recurrence equation 
\begin{equation}\label{rec} 
X_n=A_nX_{n-1}+B_{n} \quad n\in \N,
\end{equation}
where $(A_n, B_n)$ is an i.i.d.\ sequence, $A_n$ are $d\times d$
 matrices, $B_n$ are vectors and $X_0$ is an initial distribution
 independent of the sequence $(A_n,B_n)$. 
 Under mild contractivity hypotheses (see \cite{BP, B})
the sequence $X_n$ converges in law to a random
variable $\Xa$ satisfying 
\begin{equation}
\label{dif recurrence} \Xa \stackrel{d}{=} A \Xa+B, \end{equation} where 
$(A,B)$ is a generic element of the sequence $(A_n,B_n)$ and $\Xa$ is independent of $(A,B)$. The law of  $\Xa $ is the unique solution of \eqref{dif recurrence}.

There is considerable interest in studying various aspects of the
iteration \eqref{rec} and, in particular, the tail behavior of
$\Xa $. The story started with Kesten \cite{K} who
obtained fundamental results about tails of $\Xa$ for 
matrices $A$ having nonnegative entries with the assumption that there is $n$ such that the product $A_1\cdots A_n$ has strictly positive entries with positive probability.

Given $x=(x_1,\dots x_d)$ in the unit sphere $\mathbb{S}^{d-1}$, let
\begin{equation*}
\langle x, \Xa \rangle =\sum _{j=1}^dx_j\Xa _j,\quad \Xa =(\Xa _1,\dots \Xa _d).
\end{equation*}
Under appropriate assumptions Kesten \cite{K}
proved that there is $\a >0$ and a function $e_{\a }$ on $\mathbb{S}^{d-1}$ such that
\begin{equation}\label{rege}
\lim _{t\to \8 }t^{\a }\P (\langle x,\Xa \rangle >t)=e_{\a }(x),\quad x\in \mathbb{S}^{d-1}
\end{equation}
and $e_{\a }(x) > 0$ for $ x\in \mathbb{S}^{d-1}\cap [0,\8
)^d$. Later on analogous results were obtained by
Alsmeyer and Mentemeier \cite{AM} (invertible
matrices $A$ with density assumptions), Buraczewski at al.
\cite{BDGHU} (similarities), Guivarch and Le
Page \cite{GL1} (matrices satisfying some geometric irreducibility properties but with a possibly singular law), Kl\"uppelberg and Pergamenchtchikov \cite{KP} (random coefficient autoregressive model), Mirek \cite{Mi} (multidimensional Lipschitz recursions). Basic moment assumptions on $A$ and $B$ are such that the asymptotics \eqref{rege} is mainly determined by $A$. See \cite{BDM} for an elementary
explanation of Kesten's result and the other results.

For all the matrices considered above we have the same
tail behavior in all directions, one of the reasons being a certain
irreducibility or homogeneity of the action of the group $\langle \supp A \rangle $ generated by
the support of the law of $A$. The latter is discussed carefully in Section 4.4 of \cite{BDM}. Upper triangular matrices do not fit into any of the frameworks mentioned above. In particular, there are plenty of eigenspaces for the action of $\langle \supp A \rangle $. 

If $A=diag (A_{11},\dots A_{dd})$ is diagonal, $\E
A_{ii}^{\a _i}=1$ and $\a _1, \dots \a _d$ are different (see
e.g.\ \cite{buraczewski:damek:2010},
\cite{BDGHU} and 
\cite[Appendix D]{BDM}) then $\P (\Xa _i>t) \sim t^{-\a _i}$.
To have a more illuminating example consider $2\times 2$ upper triangular matrices ($\P (A_{21}=0)=1$). Suppose that  $A_{11}, A_{22}>0$ and $P(A_{12}\neq 0)>0$,
  $\E  A_{ii}^{\a _i}=1$ and $\a _1\neq \a _2$. If $\a _1<\a _2$ then 
  $$\P (\Xa _i>t) \sim t^{-\a _i},$$
  where $\sim$ means $\lim _{t\to \8}\P (\Xa _i>t) t^{\a _i}$ exists, it is finite and strictly positive. But if $\a _1>\a _2$ then \eqref{rege} holds with $\a _2$, see \cite{DMS}.

The pattern is more general. For $d\times d$ upper triangular matrices such that $\E A_{ii}^{\a _i}=1$, with $\a _i \neq \a _j$, $i\neq j$, we have
\begin{equation*}
\P (\Xa _i>t) \sim t^{-\tilde \a _i},
\end{equation*}
where $\tilde \a _i$ depends on $\a _i,\dots \a _d$ \cite{matsui:swiatkowski:2017}.

There is a natural question what happens when $\a _1=\a _2$. It is addressed in the present paper under the additional assumption that $A_{11}=A_{22}>0$, $A_{12}\in \R$. We observe behavior that has not been observed yet for matrix recursions under ``Kesten-Goldie'' moment assumptions $\E A_{ii}^{\a }=1$, $\E A_{ii}^{\a }\log A_{ii}<\8$, $\E |B|^{\a }<\8 $:
\begin{equation}\label{bothtail}
\P (\Xa _1>t)\sim \begin{cases} (\log t)^{\a }t^{-\a }, \quad \mbox\ if \ \ \E A_{12}A_{11}^{\a -1}\neq 0\\ (\log t)^{\a \slash 2}t^{-\a }, \quad \mbox\ if \ \ \E A_{12}A_{11}^{\a -1}= 0.\end{cases}\end{equation}
For the first time a non trivial slowly varying function appears not as a result of the heavy behavior of $B$ (like in \cite{Gre94, DK}) or weaker assumptions on $A$, see \cite{Kev}, namely. To obtain \eqref{bothtail} we need to study a ``perturbed'' perpetuity, see \eqref{ikszero}, which itself is interesting. $\Xa _2$ satisfies a one dimensional version of \eqref{dif recurrence} and so $\P (\Xa _2>t)\sim t^{-\a }$.   
 
 It turns out that the appearance of triangular matrices in \eqref{rec} generates a lot of technical complications,  it is challenging and it is far from being solved in arbitrary dimension when some $\a_1,\dots \a _d$ may be equal.  
The natural conjecture is 
\begin{equation*}
\P (\Xa _i>t) \sim L_i(t)t^{-\tilde \a _i},
\end{equation*}
where $L_i(t)$ are slowly varying functions, most likely $L_i(t)=(\log t)^{\beta _i}$. 
Even for $2\times 2$ matrices, the case when $\a _1= \a _2$ and $A_{11}, A_{22}$ are different seems to be, in our opinion, out of reach in full generality at the moment. 

Our results apply to the squared volatility sequence $W _t=(\s ^2_{1,t}, \s
^2_{2,t})$ of the bivariate GARCH(1,1) financial model, see \cite{BDM}, Section 4.4.5 and \cite{DMS}.
Then $W_t$ satisfies \eqref{rec} with matrices
$A_t$ having non-negative entries. If all the entries of $A_t$ are
strictly positive then the theorem of Kesten applies and both $\s
^2_{1,t}$ and $\s ^2_{2,t}$ are regularly varying with the same
index, see \cite{matsui:mikosch:2016}, \cite{mikosch:starica:2000}. But if this is not the case then we have to go beyond Kesten's approach as it is done in \cite{DMS} or in the present paper.
   From the point of view of applications it is reasonable to relax
the assumptions on $A_t$ because it allows us to capture a larger class of financial models. With $A_{11}=A_{22}$ in the bivariate GARCH(1,1) we obtain 
$$\P (\s ^2_{1,t}>t)\sim (\log t)^{\a }t^{-\a },\quad  \P (\s
^2_{2,t}>t)\sim t^{-\a },$$
while the results of \cite{DMS} say 
$$\P (\s ^2_{1,t}>t)\sim t^{-\min (\a _1,\a _2) },\quad  \P (\s
^2_{2,t}>t)\sim t^{-\a _2}.$$

%%%%%%%%%%%%%%%%%%%%%%%%%%%%%%%%%%%%%%%%%%%%%%%%%%%%%%%%%%%%%%%%%%%%%%%%%%%%%%%%%%%%%%%%%%%%%%%%%%%%%%%%%%%%%%%%%

\section{Preliminaries and main results}
From now on $A_n$ in \eqref{rec} is a $2\times 2$ upper triangular matrix and $B_{n}\in \R ^2$. 
%Let $A$, $B$ a generic matrix and a generic vector.
We assume that the entries on the diagonal of $A$ are equal and positive i.e. $A=[a_{ij}]$, with $a_{11}=a_{22}=a>0$, $a_{21}=0$, $a_{12}\in \R$. Let  $y=a^{-1}a_{12}$ and so $A$ is determined by the random variable $(y,a)\in \R \times \R ^+$. The vector $B$ will be written $(b_1,b_2)$.
Therefore, we have a sequence $(a_n,y_n,b_{1,n},b_{2,n})$ of i.i.d random variables such that
$$A_n= \left ( \begin{array}{cc}

 a_n&y_na_n\\

0&a_n 

\end{array}\right ), \quad B_n=(b_{1,n},b_{2,n})$$

Under very mild hypotheses the stationary solution $\Xa $ for \eqref{rec} exists and it is given by 
\begin{equation}\label{series}
\Xa =\sum _{n=1}^{\8 }A_1\cdots A_{n-1}B_n.
\end{equation}
Indeed, if $E\log ^+\| A\|<\8$ and $ E\log a<0$ then  the Lyapunov exponent
$$
\lim _{n\to \8}\frac{1}{n}\E \log \| A_1\cdots A_n\| <0$$
is strictly negative \cite{GGO} and so the series \eqref{series} converges a.s.\footnote{The statement in \cite{GGO} is much more general than what we need here and the proof is quite advanced. If 
 there is $\eps>0$ such that $\E  a^{\eps } <1$ and $\E (|y|^{\eps }+|b_1|^{\eps}+|b_2|^{\eps })<\8 $, then negativity of the Lapunov exponent follows quite easily, see \cite{S}, Proposition 7.4.5 and e.g \cite{DMS}. Finiteness of the above moments is assumed here anyway, see \eqref{ass2}, \eqref{ass3} and  \eqref{ass5}}.
We write
\begin{equation*}
A_n=a_nI+N_n,
\end{equation*}
where $N_n$ is an upper triangular matrix with zeros on the diagonal. Then $N_iN_j=0$ and so for $n\geq 2$
\begin{equation*}
A_1\cdots A_{n-1}=a_1\cdots a_{n-1} I +\sum _{j=1}^{n-1}a_1\cdots a_{j-1}N_ja_{j+1} \cdots a_{n-1}.
\end{equation*} 
Let $$ \Pi _n=a_1\cdots a_n$$ and
\begin{equation}\label{ikszero}
 \Xa _0=\sum _{n=2}^{\8}\Pi _{n-1}\big (\sum _{j=1}^{n-1}y_j\big )b_{2,n}.
 \end{equation}
Then $\Xa =(\Xa _1,\Xa _2)$, where
\begin{align}
\Xa _1=&\sum _{n=1}^{\8} \Pi _{n-1}b_{1,n}+ \Xa _0=\Xa '_1+\Xa _0\label{split1}\\
%Y_0\sum _{n=2}^{\8}a_1\cdots  a_{n-1}\big (\sum _{j=1}^ny_j\big )b_{2,n}\\
\Xa _2=&\sum _{n=1}^{\8} \Pi _{n-1}b_{2,n}
\end{align}
We are going to investigate the tail of $\Xa $. Our standing assumptions are:
\begin{equation}\label{ass1} 
\log a\  \mbox{is non-arithmetic}, \end{equation}
 there is $\a >0$ such that 
\begin{equation}\label{ass2}
\E a^{\a }=1\ \mbox{and} \ 0<\rho =\E a^{\a}\log a <\infty ,\end{equation} 
\begin{equation}\label{ass3}
\E (|b_1|^{\a } +|b_2|^{\a })<\8 \end{equation}
for every $x\in \R $
\begin{equation}\label{ass4}
\P (ax+b_2=x) <1 .\end{equation}
Under assumption $\E a^{\a }=1$, $\rho $ is strictly positive but it may be infinite. So finiteness is what we assume in \eqref{ass2}. Then the well known Kesten-Goldie Theorem (Theorem \ref{kestengoldie} in the Appendix), implies that
\begin{align}
\lim _{t\to \infty }\P [\Xa _2>t]t^{\a }&=c_+\label{righttail}\\
\lim _{t\to \infty }\P [\Xa _2<-t]t^{\a }&=c_- \label{lefttail}
\end{align}
$c_++c_->0$ but $c_+, c_-$ are not always strictly positive. However,  conditions for their strict positivity are easy to formulate, see \cite{BD} . Similarly, 
$$\P (|\Xa '_1|>t)\sim t^{-\a }$$
but in our case the tail of $\Xa _0$ is essentially heavier than that of $\Xa '_1$: the perturbation $\sum _{j=1}^{n-1}y_j$ in \eqref{ikszero} is responsible for the factor $(\log t)^{\a \slash 2}$ or $(\log t)^{\a }$ in \eqref{bothtail}. 
  
For the tail of $\Xa _0$ (or equivalently $\Xa _1$) we will need more assumptions
\begin{equation}\label{ass6} 
\log a\  \mbox{is non-lattice} \footnote{$\log a$ is not supported by the set of the form $c_1+c_2\Z$, where $\Z$ is the set of integers and $c_1,c_2\in \R$ are fixed.} \end{equation} 
and there is $\eps _0 >0$ such that 
\begin{equation}\label{ass5}
\E a^{\a +\eps _0}<\8 ,\ \E |b_2|^{\a +\eps _0}<\8 , \E \big (|y|^{\a +\eps _0}a^{\a +\eps _0}\big )<\8 .\end{equation}
More precisely,
\begin{equation*}%\label{bothtail}
\P (\Xa _0>t)\sim \begin{cases} (\log t)^{\a }t^{-\a }, \quad \ \  \mbox{if}  \ \ \E ya^{\a }\neq 0\\ (\log t)^{\a \slash 2}t^{-\a }, \quad \mbox{if} \ \ \E ya^{\a }= 0.\end{cases}\end{equation*}
A short scheme of the proof is given below, preceded by exact formulations of our results.

\begin{thm}\label{mthm1}
Suppose that assumptions \eqref{ass2}-\eqref{ass4}, \eqref{ass6}, \eqref{ass5} are satisfied
and let $c_+,c_-$ be as in \eqref{righttail} and \eqref{lefttail}.
Assume further that $\E ya^{\a}=0$ and that there  is $r\geq 3$, $r>2\a +1$ such that $\E |y|^ra^{\a }<\8 $. Then 
\begin{equation}\label{zeroright}
\lim _{t \to \8}\P (\Xa _1>t)t^{\a }(\log t)^{-\a \slash 2 }=\lim _{t \to \8}\P (\Xa _1<-t)t^{\a }(\log t)^{-\a \slash 2 }=
(c_++c_-)\rho ^{\a \slash 2}c_0
\end{equation}
where $c_0=c_0(K)>0$ is defined in \eqref{wplus}.
\end{thm}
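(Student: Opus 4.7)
The decomposition (\ref{split1}) writes $\Xa_1=\Xa_1'+\Xa_0$, and $\Xa_1'=\sum_{n\ge 1}\Pi_{n-1}b_{1,n}$ is a scalar perpetuity satisfying the hypotheses of the Kesten--Goldie theorem recalled in the Appendix, so $\P(|\Xa_1'|>t)=O(t^{-\alpha})$. Since the target rate $(\log t)^{\alpha/2}t^{-\alpha}$ is strictly heavier, $\Xa_1'$ is negligible and it suffices to prove
$$\lim_{t\to\infty}\P(\pm\Xa_0>t)\,t^{\alpha}(\log t)^{-\alpha/2}=(c_++c_-)\rho^{\alpha/2}c_0$$
for both signs; the equality of the two tails will emerge from the analysis below.

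The object to study is
$$\Xa_0=\sum_{n=2}^{\infty}\Pi_{n-1}\,S_{n-1}\,b_{2,n},\qquad S_m:=\sum_{j=1}^{m}y_j,$$
which I would treat by the Cram\'er--Esscher tilt $d\wt{\P}/d\P=\Pi_n^{\alpha}$ on $\sigma(a_1,y_1,b_{2,1},\dots,a_n,y_n,b_{2,n})$, the standard device behind the proofs of Kesten's and Goldie's theorems. Under $\wt{\P}$, the walk $\log\Pi_n$ has positive drift $\rho$ by (\ref{ass2}); crucially, the hypothesis $\E ya^{\alpha}=0$ means $\wt{\E} y=0$, so $(S_n)$ is a centred random walk under $\wt{\P}$ with finite variance $\sigma^{2}=\E y^2 a^{\alpha}$ (higher moments are supplied by $r>2\alpha+1$). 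The CLT then gives $S_n/\sqrt{n}\Rightarrow\mathcal{N}(0,\sigma^{2})$ under $\wt{\P}$.

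The resulting heuristic is as follows. A Kesten-type renewal argument localises the tail of $\Xa_0$ at indices $n\approx n^{*}(t):=(\log t)/\rho$, the epoch at which $\Pi_n$ first reaches $t$. At such indices $|S_{n-1}|$ is of size $\sqrt{n^{*}(t)}\asymp\sqrt{(\log t)/\rho}$ and is sign-symmetric Gaussian in the limit. Applying Breiman's lemma at the critical index (using independence of $b_{2,n}$ from $(\Pi_{n-1},S_{n-1})$) ties together the $\alpha$-moment of $S_{n-1}$ and the Kesten--Goldie constants $c_\pm$ of the scalar perpetuity $\Xa_2=\sum\Pi_{n-1}b_{2,n}$. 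The sign symmetry of $S_{n-1}$ feeds positive values of $\Xa_0$ by \emph{both} signs of $b_{2,n}$, producing the \emph{sum} $c_++c_-$ and at the same time forcing the left and right tails of $\Xa_0$ to agree. After bookkeeping of the $\rho$-powers coming from the renewal density of $\log\Pi_n$ under $\wt{\P}$ and of the Gaussian moment $\E|\mathcal{N}(0,1)|^{\alpha}$, these collect into the constant $c_0$ defined at (\ref{wplus}) and the stated limit results.

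The main technical obstacle is turning this CLT/renewal heuristic into a sharp asymptotic, and I expect it to split into three subproblems. (a) A quantitative renewal theorem for $\log\Pi_n$ under $\wt{\P}$ that localises $N(t)=\inf\{n:\Pi_n>t\}$ to an $O(1)$-window around $n^{*}(t)$ uniformly in the $y$-noise; here the non-lattice assumption (\ref{ass6}) is essential. (b) Uniform integrability of $(S_n/\sqrt{n})^{\alpha}$ under $\wt{\P}$ jointly with the overshoot at $N(t)$, which is precisely where the moment bound $\E|y|^r a^{\alpha}<\infty$ with $r>2\alpha+1$ enters (Marcinkiewicz--Zygmund plus tail estimates for the $y$-walk). (c) Showing that indices $n\ll n^{*}(t)$ (large deviations for $\log\Pi_n$) and $n\gg n^{*}(t)$ (controlled by the extra moment $\eps_0$ in (\ref{ass5})) contribute only a lower-order amount. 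Step (b) is the delicate one: Goldie's implicit-renewal argument has to be re-done with a non-integrable, $n$-dependent weight $|S_{n-1}|^{\alpha}$, and one needs a genuine convergence, not merely an upper bound, at the critical scale.
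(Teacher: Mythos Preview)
Your heuristic correctly identifies the essential mechanisms: the Esscher tilt $d\P_\alpha=\Pi_n^\alpha\,d\P$, under which $\sum y_j$ becomes a centred walk; the localisation of the dominant contribution near $n^*(t)=\rho^{-1}\log t$; and the fact that the Gaussian limit of $n^{-1/2}\sum_{j\le n}y_j$ under $\P_\alpha$ produces both the constant $c_0$ and the equality of the two tails. But what you have written is an outline, not a proof, and the concrete execution in the paper differs from your plan in ways that matter.

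Your step~(a) is misstated: under $\P_\alpha$ the first-passage time $N(t)$ fluctuates around $n^*(t)$ on scale $\sqrt{\log t}$, not $O(1)$. The paper avoids stopping times altogether. It fixes a \emph{deterministic} window $[n_0-L,\,n_0+L]$ with $L=D\sqrt{(\log t)(\log\log t)}$ and shows (Lemma~\ref{irrelevent}), via Chebyshev with an optimised exponent $\alpha\pm\varepsilon(k)$, that the parts of $\Xa_0$ outside this window are $O(t^{-\alpha})$. The decisive algebraic step, absent from your sketch, is then to factor the windowed sum as
\[
M_t=\Pi_{n}\,\Y_{n}\,S_{2L}\;+\;M_t'',\qquad n=n_0-L-1,\quad \Y_n=\textstyle\sum_{j\le n}y_j,
\]
where $S_{2L}$ is a truncated perpetuity \emph{independent} of $(\Pi_n,\Y_n)$, and $M_t''$ collects the partial $y$-sums over the window itself. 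The remainder $M_t''$ is shown to be $o\big(t^{-\alpha}(\log t)^{\alpha/2}\big)$ in Lemma~\ref{offzero}; it is precisely here that $r>2\alpha+1$ is used, via the non-uniform Berry--Esseen/Edgeworth bound of Theorem~\ref{petrov1} on $\P_\alpha(|\Y_m|>x)$, not via Marcinkiewicz--Zygmund as you propose.

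The main term $\Pi_n\Y_n S_{2L}$ is then analysed (Lemma~\ref{mainzero}) by replacing $S_{2L}$ with the full perpetuity $S$ (same law as $\Xa_2$, tail constants $c_\pm$), slicing $|\Y_n|$ into $[\delta^{-1}\sqrt{n},\,\delta\sqrt{n}]$, changing measure, and applying the \emph{joint} CLT for $\big(n^{-1/2}\Y_n,\,n^{-1/2}\sum(\log a_j-\rho)\big)$ under $\P_\alpha$; the constraint $\Pi_n\le t$ becomes a half-space for the second coordinate whose threshold tends to $+\infty$, so it disappears in the limit. The off-scale slices of $|\Y_n|$ are controlled again by the Edgeworth bound and by Petrov's precise large-deviation theorem (Theorem~\ref{petrov2}) for $\Pi_n$. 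In short, your renewal/overshoot/Breiman picture is replaced by a deterministic-window decomposition plus an independence factorisation that reduces everything to a joint CLT and a Kesten--Goldie input for the independent factor $S$.
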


\begin{thm}\label{mthm}
Let $c_+,c_-$ be as in \eqref{righttail} and \eqref{lefttail}.
Suppose that assumptions \eqref{ass2}-\eqref{ass4}, \eqref{ass6}, \eqref{ass5} are satisfied. Assume further that $s=\E ya^{\a }\neq 0$ and there is $r\geq 3$, $r>\a $ such that $\E |y|^ra^{\a }<\8 $. Then 
\begin{equation}\label{right}
\lim _{t \to \8}\P (\Xa _1>t)t^{\a }(\log t)^{-\a }=\begin{cases} c_+s^{\a }\rho ^{\a }\ \ \ \mbox{if}\ s>0\\ c_-|s|^{\a }\rho ^{\a } \ \ \ \mbox{if}\ s<0\end{cases}
\end{equation}
and
\begin{equation}\label{left}
\lim _{t \to \8}\P (\Xa _1<-t)t^{\a }(\log t)^{-\a }=\begin{cases} c_-s^{\a }\rho ^{\a }\ \ \ \mbox{if}\ s>0\\ c_+|s|^{\a }\rho ^{\a } \ \ \ \mbox{if}\ s<0.\end{cases}
\end{equation}
\end{thm}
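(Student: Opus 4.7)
The plan is to first reduce to the tail behavior of $\Xa_0$, and then analyze $\Xa_0$ via a perturbed Kesten--Goldie argument. By \eqref{split1}, $\Xa_1 = \Xa_1' + \Xa_0$, where $\Xa_1' = \sum_{n\geq 1}\Pi_{n-1}b_{1,n}$ is a scalar affine perpetuity. Under \eqref{ass1}--\eqref{ass3} the classical Kesten--Goldie theorem gives $\P(|\Xa_1'|>t)=O(t^{-\alpha})$, which is of lower order than the target $t^{-\alpha}(\log t)^{\alpha}$. Standard inclusion-of-events estimates therefore reduce the proof of \eqref{right}--\eqref{left} to the corresponding two-sided asymptotics for $\Xa_0$.

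The guiding heuristic is a Cram\'er tilt. Define $d\tilde\P/d\P|_{\mathcal F_n}=\Pi_n^{\alpha}$. Then $\tilde\E\log a=\rho>0$, and crucially $\tilde\E y_j=\E y a^{\alpha}=s\neq 0$. The rare event $\{\Pi_{n-1}\approx u\}$ for large $u$, which drives the $u^{-\alpha}$ tail of $\Xa_2$, is typical under $\tilde\P$ and is contributed by $n$ near $u/\rho$. On this event the perturbation $S_{n-1}^{*}=y_1+\cdots+y_{n-1}$ concentrates near its tilted mean $(n-1)s\approx s(\log u)/\rho$, so heuristically
\[
\Xa_0 \;\approx\; \frac{s\log t}{\rho}\,\Xa_2.
\]
Plugging this into $\P(\Xa_2>u)\sim c_+u^{-\alpha}$ predicts the shape $t^{-\alpha}(\log t)^{\alpha}$, with the sign of $s$ determining how $c_+$ and $c_-$ pair up in \eqref{right}--\eqref{left}.

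To execute this rigorously I would decompose
\[
\Xa_0 \;=\; s\sum_{n\geq 2}(n-1)\Pi_{n-1}b_{2,n} \;+\; \sum_{n\geq 2}\bigl(S_{n-1}^{*}-(n-1)s\bigr)\Pi_{n-1}b_{2,n} \;=:\; sW+E.
\]
Shifting indices in $W$ gives the affine identity $W\stackrel{d}{=}a(W+\Xa_2)$ with $(W,\Xa_2)$ independent of $a$, so $W$ satisfies the perpetuity $W=aW+Q$ with inhomogeneity $Q=a\Xa_2$. For the fluctuation term $E$ I would truncate the sum at $n\leq M\asymp\log t$, bound higher moments of $\sum_{n\leq M}(S_{n-1}^{*}-(n-1)s)\Pi_{n-1}b_{2,n}$ by a Rosenthal/Burkholder-type inequality using $\E|y|^{r}a^{\alpha}<\infty$ with $r\geq 3$, $r>\alpha$, and dispose of the far tail $n>M$ using the extra moment $\E a^{\alpha+\eps_0}<\infty$; this shows $\P(|E|>t)=o(t^{-\alpha}(\log t)^{\alpha})$.

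The main obstacle is the tail of $W$. By Breiman's lemma $\P(Q>t)\sim c_+t^{-\alpha}$, yet $\E|Q|^{\alpha}=\infty$, so the standard Goldie implicit renewal theorem does not apply directly; the $(\log t)^{\alpha}$ correction is manufactured precisely because the perturbation has the same regular-variation index as the solution. Setting $r(u)=e^{\alpha u}\P(W>e^u)$ and writing the renewal equation for $r$ induced by $W=aW+Q$, the right-hand side is no longer direct-Riemann-integrable but has polynomial growth of order $u^{\alpha-1}$; convolving this growth against the renewal density $1/\rho$ of the Cram\'er-tilted walk $\log\Pi_n$ yields $r(u)\sim Cu^{\alpha}$. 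Identifying $C$ via \eqref{righttail}--\eqref{lefttail} and tracking the sign of $s$ produces \eqref{right}, and the symmetric argument applied to $-\Xa_0$ gives \eqref{left}.
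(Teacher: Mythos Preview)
Your global decomposition $\Xa_0 = sW + E$ with $W=\sum_{n\geq 2}(n-1)\Pi_{n-1}b_{2,n}$ is exactly the strategy the paper adopts (see the remark after Theorem~\ref{mthm} and Section~5), but both halves of your execution contain genuine gaps.

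\textbf{The term $W$.} The identity $W\stackrel{d}{=}a(W+\Xa_2)$ is correct, but on the right-hand side $W$ and $\Xa_2$ are built from the \emph{same} sequence $(a_k,b_{2,k})_{k\geq 2}$ and are therefore strongly dependent. Hence $Q=a\Xa_2$ is not independent of the $W$ it is added to, so this is not a perpetuity in the Goldie sense and implicit renewal does not apply as written. In the renewal equation for $r(u)=e^{\alpha u}\P(W>e^u)$ the forcing term is governed by $\P(W+\Xa_2>s)-\P(W>s)$; estimating this difference requires the joint tail of $(W,\Xa_2)$, hence the tail of $W$ itself, so your claim that the forcing grows like $u^{\alpha-1}$ is circular. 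The paper avoids this entirely: it first restricts $\Xa_0$ to the window $n\in[n_0-L,\,n_0+L]$ with $n_0=\rho^{-1}\log t$ and $L\asymp\sqrt{\log t\,\log\log t}$ (Lemma~\ref{irrelevent} shows the complement is $O(t^{-\alpha})$), and on this window the coefficient $(n-1)$ is essentially the constant $n_0$, so the localized drift $R_t$ obeys $\P(R_t>t)\approx\P(n_0\,\Xa_2>t)$, which is read off directly from Kesten--Goldie for $\Xa_2$ (Lemma~\ref{Ras}). No renewal analysis of the infinite series $W$ is needed.

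\textbf{The term $E$.} The summands $(S_{n-1}^{*}-(n-1)s)\Pi_{n-1}b_{2,n}$ are not martingale differences under $\P$ (unless $\E b_2=0$), and the centering $\E_{\alpha}(y-s)=0$ lives only under the tilted measure, which does not combine cleanly with a sum over varying $n$. A termwise bound using $\E_{\alpha}|S_{n-1}^{*}-(n-1)s|^{\alpha}\asymp n^{\alpha/2}$ together with the triangle inequality in $L^{\alpha}$ yields at best $\P(|E_M|>t)\leq Ct^{-\alpha}(\log t)^{\gamma}$ with $\gamma\geq\min(1+\a/2,\,3\a/2)$, which is never $o\big((\log t)^{\alpha}\big)$. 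The paper instead applies its full centered-case analysis (Lemma~\ref{mainzero} and the second statement of Lemma~\ref{offzero}) to the shifted variable $\tilde y=y-s$; these lemmas rely on Edgeworth expansions and Petrov's uniform large-deviation theorem rather than Rosenthal--Burkholder, and deliver $\P(|M_t-sR_t|>t)=O\big(t^{-\alpha}(\log t)^{\beta}\big)$ with $\beta<\alpha$.
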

\begin{rem}
The simplest model of the case described in the previous theorem is obtained when $y=1$ and so the main term in \eqref{ikszero} becomes $\Xa _0(1)=\sum _{n=2}^{\8 }\Pi _{n-1}(n-1)b_{2,n}$. The idea is to establish asymptotics of $\Xa _0(1)$ and to compare $\Xa _0$ with $\Xa _0(1)$ by applying  Theorem \ref{mthm1} to 
$\Xa _0 - s \Xa (1)$.
\end{rem}
\begin{rem}
Clearly \eqref{zeroright}, \eqref{right} and \eqref{left} give relevant information only if $c_+$ or $c_-$ are strictly positive although the statement is true without that assumption. A simple necessary and sufficient condition for strict positivity of $c_+$ is given in Lemma 3.2 of \cite{BD}. Namely, let $\mu $ be the law of $(a,b_2)$. For $(u,v)\in \supp \mu $, $u\neq 1$ define $x(u,v)=\frac{v}{1-u}$. $c_+>0$ if and only if one of the following conditions is satisfied: $\P (a=1, b_2>0)>0$
or there are $(u_1,v_1), (u_2,v_2)$ in the support of $\mu $ such that $u_1>1$,  $u_2<1$ and $x(u_1,v_1)<x(u_2,v_2)$.
\end{rem}
\begin{proof}[Proof of Theorem \ref{mthm1} and \ref{mthm}-the scheme]
We split $\Xa _0$ into three parts
$$\Xa _0= N_t(L)+M_t(L)+N_{t,\8 }(L),$$ see \eqref{nleft} -\eqref{nright}.
Then, in view of \eqref{split1}
$$
\Xa _1= \Xa '_1+N_t(L)+M_t(L)+N_{t,\8 }(L).$$
In Section 3 we prove that $N_t(L), N_{t,\8 }(L)$ are negligible in the asymptotics, Lemma \ref{irrelevent}. Hence taking into account 
 Theorem \ref{kestengoldie} we obtain
$$
\P (\Xa '_1+N_t(L)+N_{t,\8 }(L)>t )=O(t^{-\a}).$$
So $M_t(L)$ is the main part and, if
$\E ya^{\a}=0$, $P(M_t(L)>t)$ is estimated in Section 4. \eqref{zeroright} follows from Lemmas \ref{mainzero} and \ref{offzero}. Section 5 is devoted to the case $\E ya^{\a}\neq 0$ and \eqref{right}, \eqref{left} follow from  Corollary \ref{nonzero}.
\end{proof}

Finally, we have the following ``degenerate'' regular behavior of $\Xa $.
\begin{cor}
Suppose that assumptions of Theorem \ref{mthm1} (or \ref{mthm} respectively) are satisfied.
Then for every $v=(v_1,v_2)\in \R ^2$
\begin{equation}\label{regular}
\lim _{t \to \8}\P [\langle v, \Xa \rangle >t]t^{\a }(\log t)^{-\tilde \a }=v_1
\lim _{t \to \8}\P [ \Xa _1  >t]t^{\a }(\log t)^{-\tilde \a }, 
\end{equation}
where $\tilde \a = \frac{\a }{2}$ (or $\tilde \a = \a $ respectively).  
\end{cor}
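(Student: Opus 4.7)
The plan is a simple comparison argument exploiting the fact that $\Xa_2$ has a genuinely lighter tail than $\Xa_1$. By \eqref{righttail}--\eqref{lefttail} one has $\P(|\Xa_2|>t)=O(t^{-\a})$, whereas Theorem \ref{mthm1} (resp.\ Theorem \ref{mthm}) gives $\P(\pm\Xa_1>t)\sim C_\pm t^{-\a}(\log t)^{\tilde\a}$ with $\tilde\a>0$. Hence any contribution from $v_2\Xa_2$ is smaller than the contribution of $v_1\Xa_1$ by a factor of $(\log t)^{\tilde\a}$, and the limit on the left-hand side of \eqref{regular} is entirely driven by $v_1\Xa_1$.

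First I would decompose $\langle v,\Xa\rangle=v_1\Xa_1+v_2\Xa_2$ and dispose of the trivial case $v_1=0$: then $\P[\langle v,\Xa\rangle>t]t^{\a}(\log t)^{-\tilde\a}=\P[v_2\Xa_2>t]t^{\a}(\log t)^{-\tilde\a}\to 0$ because $\P[v_2\Xa_2>t]=O(t^{-\a})$ while $(\log t)^{-\tilde\a}\to 0$, and the right-hand side is also zero. So assume $v_1\neq 0$.

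Next I would run the standard two-sided sandwich: for every $\eps\in(0,1)$,
\begin{equation*}
\P[v_1\Xa_1>(1+\eps)t]-\P[v_2\Xa_2<-\eps t]\le\P[\langle v,\Xa\rangle>t]\le\P[v_1\Xa_1>(1-\eps)t]+\P[v_2\Xa_2>\eps t].
\end{equation*}
Multiply through by $t^{\a}(\log t)^{-\tilde\a}$. By \eqref{righttail}--\eqref{lefttail} the two probabilities involving $v_2\Xa_2$ are $O(t^{-\a})$, so after multiplication they are $O((\log t)^{-\tilde\a})\to 0$ since $\tilde\a>0$. For the remaining terms, split on the sign of $v_1$: if $v_1>0$ rewrite $\P[v_1\Xa_1>(1\pm\eps)t]=\P[\Xa_1>(1\pm\eps)t/v_1]$, and if $v_1<0$ rewrite as the left-tail probability $\P[\Xa_1<(1\pm\eps)t/v_1]$. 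In each case Theorem \ref{mthm1} (or \ref{mthm}) plus $\log((1\pm\eps)t/|v_1|)\sim\log t$ gives
\begin{equation*}
\lim_{t\to\infty}\P[v_1\Xa_1>(1\pm\eps)t]\,t^{\a}(\log t)^{-\tilde\a}=\frac{|v_1|^{\a}}{(1\pm\eps)^{\a}}\cdot C_{\mathrm{sgn}(v_1)},
\end{equation*}
where $C_+$ and $C_-$ denote the right- and left-tail constants of $\Xa_1$ supplied by the relevant theorem.

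Finally, letting $\eps\to 0$ squeezes the upper and lower bounds to the common value $|v_1|^{\a}C_{\mathrm{sgn}(v_1)}$, which is exactly the right-hand side of \eqref{regular} (with the coefficient of the limiting tail constant of $\Xa_1$ interpreted as $v_1^{\a}$ for $v_1>0$, and $|v_1|^{\a}$ times the opposite-tail constant for $v_1<0$). No genuine obstacle arises: the entire argument is driven by the single observation that the $\Xa_2$-tail is lighter than the $\Xa_1$-tail by a factor of $(\log t)^{\tilde\a}$, and the only care needed is bookkeeping the sign of $v_1$ to pair with the correct right- or left-tail constant from Theorem \ref{mthm1} or \ref{mthm}.
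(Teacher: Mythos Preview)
Your argument is correct and is exactly the standard two-sided squeeze one would expect here; the paper itself gives no proof of this corollary, so there is nothing to compare against beyond noting that your approach is the natural one. The key observation---that $\P(|\Xa_2|>t)=O(t^{-\a})$ is negligible against the $(\log t)^{\tilde\a}t^{-\a}$ tail of $\Xa_1$---is precisely what makes the corollary an immediate consequence of Theorems \ref{mthm1} and \ref{mthm}, and your bookkeeping on the sign of $v_1$ is done correctly.

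One point worth making explicit: the formula \eqref{regular} as printed in the paper, with the bare factor $v_1$ on the right-hand side, cannot be literally correct (the dependence on $v_1$ must be of order $|v_1|^{\a}$, not linear). What your proof actually establishes is
\[
\lim_{t\to\infty}\P[\langle v,\Xa\rangle>t]\,t^{\a}(\log t)^{-\tilde\a}
=|v_1|^{\a}\,\lim_{t\to\infty}\P[\operatorname{sgn}(v_1)\,\Xa_1>t]\,t^{\a}(\log t)^{-\tilde\a},
\]
which under Theorem \ref{mthm1} simplifies to $|v_1|^{\a}(c_++c_-)\rho^{\a/2}c_0$ regardless of the sign of $v_1$, while under Theorem \ref{mthm} it picks out the right- or left-tail constant of $\Xa_1$ according to $\operatorname{sgn}(v_1)$. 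You already flag this reinterpretation in your last paragraph; it is the right correction to the stated formula.
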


\section{Negligible parts of $\Xa _0$}
There are terms in \eqref{ikszero} that give the correct asymptotics and those that are irrelevent. We are going to discuss it now. 
Let
\begin{equation}\label{n0}
n_0=\rho ^{-1}\log t.
\end{equation}
and, given $D>0$,  let 
\begin{equation}\label{L}
L=L(t)=D\sqrt{(\log \log t) \log t}.\end{equation}
We split $\Xa _0$ into three parts
\begin{equation}\label{split}
\Xa _0= N_t(L)+M_t(L)+N_{t,\8 }(L),
\end{equation}
where
\begin{align}
N_t(L) &=\sum _{n=2}^{n_0-L-1}\Pi _{n-1}\big (\sum _{j=1}^{n-1}y_j\big )b_{2,n}\label{nleft}\\
M_t(L)&=\sum _{n=n_0-L}^{n_0+L}\Pi _{n-1}\big (\sum _{j=1}^{n-1}y_j\big )b_{2,n}\label{middle}\\
N_{t,\8}(L)&=\sum _{n=n_0+L+1}^{\infty }\Pi _{n-1}\big (\sum _{j=1}^{n-1}y_j\big )b_{2,n}\label{nright}
\end{align}
and we will prove that the terms \eqref{nleft} and \eqref{nright} are negligible. More precisely, we have the following lemma. 
\begin{lemma}\label{irrelevent}
Suppose that \eqref{ass2} and \eqref{ass5} are satisfied. For $\xi \geq 0$ there is $D_0$ such that for all $D\geq D_0$
\begin{align}
\P \big (N_t(L)>t\big )&=O\big (t^{-\a}(\log t)^{-\xi }\big )\label{smallind}\\
\P \big (N_{t,\8 }(L)>t\big )&=O\big (t^{-\a}(\log t)^{-\xi }\big )\label{largind}
\end{align}
\end{lemma}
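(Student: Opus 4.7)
The plan is a Chernoff-type tilting argument: because $\E a^\a = 1$ with positive tilt-drift $\rho = \L'(\a) > 0$ (where $\L(\b) := \log \E a^\b$), the event $\{\Pi_{n-1} > t\}$ is ``typical'' only for $n$ close to $n_0 = \rho^{-1}\log t$, and for $|n - n_0|\geq L = D\sqrt{(\log\log t)\log t}$ it acquires an extra Gaussian large-deviation gain of order $\exp(-cL^2/n_0) = (\log t)^{-cD^2}$, which beats any prescribed $(\log t)^{-\xi}$ once $D$ is large. Setting $\s^2 := \L''(\a) > 0$, the Taylor expansion $\L(\a \pm \d) = \pm \d\rho + \tfrac12 \s^2 \d^2 + O(\d^3)$ lets us extract this gain via Markov's inequality with a slightly shifted exponent $p = \a \pm \d$ (plus for $N_t(L)$, minus for $N_{t,\8}(L)$), $\d > 0$ to be optimized.

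For \eqref{smallind}, set $u_n := \Pi_{n-1}|\sum_{j<n}y_j|\,|b_{2,n}|$ and $M := n_0 - L - 1$. Bounding $|\sum_{j<n} y_j|^{\a+\d}$ term-by-term (by Jensen if $\a+\d \geq 1$, by subadditivity otherwise), the i.i.d.\ structure of $(a_j,y_j)$ reduces $\E \Pi_{n-1}^{\a+\d}|\sum y_j|^{\a+\d}$ to a product
\[
C\, n^{C}\,(\E a^{\a+\d})^{n-2}\,\E(a|y|)^{\a+\d},
\]
with both moments on the right finite for $\d\leq\eps_0$ by \eqref{ass5} (via moment interpolation). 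Allocating budget $t/M$ per term in a union bound and using $(\E a^{\a+\d})^{n-2} = e^{(n-2)\L(\a+\d)}\leq t^\d \exp(-L\d\rho + \tfrac12 \s^2\d^2 n_0)(1+o(1))$ for $n \leq M$, Markov yields
\[
\P(|N_t(L)|>t) \leq C\,t^{-\a}\exp(-L\d\rho + \tfrac12\s^2\d^2 n_0)\,(\log t)^{C}.
\]
The minimiser $\d^* := L\rho/(\s^2 n_0) = O(\sqrt{(\log\log t)/\log t})$ produces the Gaussian gain $\exp(-\tfrac12 L^2\rho^2/(\s^2 n_0)) = (\log t)^{-D^2\rho^3/(2\s^2)}$, and \eqref{smallind} follows for $D$ large relative to $\xi$. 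The cubic Taylor remainder $O(\d^3 n_0)$ is negligible at $\d^*$ since $(\d^*)^3 n_0 = O((\log\log t)^{3/2}/\sqrt{\log t}) \to 0$.

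The argument for \eqref{largind} is parallel, with $p = \a - \d$ and $\L(\a-\d) < 0$ for small $\d$. Since $e^{(n-1)\L(\a-\d)}$ decays geometrically in $n$, the infinite sum $\sum_{n > n_0 + L}e^{(n-1)\L(\a-\d)}n^{C}$ is dominated by its first term $t^{-\d}\exp(-L\d\rho + \tfrac12\s^2\d^2 n_0)(\log t)^{C}$. Minkowski (if $\a-\d\geq 1$) or subadditivity reduces $\E|N_{t,\8}(L)|^{\a-\d}$ to this bound, and the Markov factor $t^{-(\a-\d)}$ again produces the target $t^{-\a}$ with the identical Gaussian gain at the optimal $\d^*$. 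The main technical point common to both parts is the moment estimate above, whose finiteness reduces to that of $\E(a|y|)^{\a\pm\d}$ and $\E a^{\a\pm\d}$; both follow from \eqref{ass5} for $\d\in[0,\eps_0]$ by Jensen's inequality.
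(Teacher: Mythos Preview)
Your proposal is correct and follows essentially the same route as the paper: Markov's inequality with a tilted exponent $\a\pm\d$, the Taylor expansion of $\L(\b)=\log\E a^{\b}$ near $\a$, and optimization of $\d$ to extract the Gaussian large-deviation gain $(\log t)^{-cD^2}$ from the window $|n-n_0|\geq L$. The only cosmetic differences are that the paper optimizes $\eps=\eps(k)$ separately for each $k=|n-n_0|$ (you use a single $\d=\d^*$ tuned to the worst case $k=L$) and, for $N_{t,\infty}(L)$, allocates a per-term budget $ct/k^2$ rather than bounding the $(\a-\d)$-moment of the whole sum via Minkowski/subadditivity; note that in your version the geometric tail contributes an extra factor $\asymp 1/\d^*\asymp(\log t)^{1/2}$, which is harmlessly absorbed into your $(\log t)^C$.
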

\begin{proof}
We start with \eqref{smallind}. 
Since 
\begin{equation}\label{neg1}
\P \big (N_t(L)>t\big )\leq \sum _{n=2}^{n_0-L-1}\P \big (\Pi _{n-1}\big (\sum _{j=1}^{n-1}y_j\big )b_{2,n}>tn_0^{-1}\big ),
\end{equation}
we estimate just one term on the right hand side above.
By Chebychev inequality for $n\geq 2$ and $\eps \leq \eps _0$ we have
$$
\P \big (\Pi _{n-1}\big (\sum _{j=1}^{n-1}y_j\big )b_{2,n}>tn_0^{-1}\big )\leq
n^{\max (1,\a +\eps )}\E (|y|a)^{\a +\eps }\E |b_2|^{\a +\eps}\big (\E a^{\a +\eps}\big )^{n-2}t^{-(\a +\eps )}n_0^{\a +\eps }.
$$
Let $\g =\max (1,\a +\eps )+\a +\eps $. 
Writing $n=n_0-k$, we obtain
\begin{equation}\label{neg2}
\P \big (\Pi _{n-1}\big (\sum _{j=1}^{n-1}y_j\big )b_{2,n}>tn_0^{-1}\big )\leq t^{-(\a +\eps )}n_0^{\g}\sum _{k=L+1}^{n_0-2}\big (\E a^{\a +\eps})^{n_0-k}.
\end{equation}
Now we choose $\eps = \eps (k)\leq \eps _0$. Let $\Lambda (\b )=\log \E a ^{\b }$. Then, there is a constant $C_1$ such that 
$$
\E a^{\a +\eps}=e^{\Lambda (\a +\eps)}\leq e^{\Lambda '(\a )\eps +C_1\eps ^2}  
$$
for $0\leq \eps \leq \eps _0$ and so
\begin{align*}
\big (\E a^{\a +\eps}\big )^{n_0-k}&\leq  e^{(\rho \eps +C_1\eps ^2)(n_0-k)}\\
&= e^{\rho \eps n_0}e^{-k\rho \eps +C_1\eps ^2(n_0-k)}\\
&=t^{\eps } e^{-k\rho \eps +C_1\eps ^2n_0}.
\end{align*}
Taking $\eps =\frac{\rho k}{2C_1n_0}$, we obtain
$$
-k\rho \eps +C_1\eps ^2n_0\leq -\frac{\rho ^2 k^2}{4C_1n_0}\leq -\frac{\rho ^3D^2}{4C_1} \log \log t.
$$ 
Notice that taking $C_1$ possibly larger we can always guarantee that $\eps \leq \frac{\rho }{2C_1}<\eps _0$ in this calculation. If $D$ is large enough then
$\frac{\rho ^3D^2}{4C_1}>D>\g +2+\xi $ and finally, in view of \eqref{neg1} and \eqref{neg2}
$$
\P \big (N_t(L)>t\big )\leq \rho ^{-\g -2} (\log t)^{\g +2-D}t^{-\a }=O(t^{-\a}(\log )^{-\xi }).$$
For \eqref{largind}, writing $n=n_0+k$ and proceeding as before,  we have
\begin{align*}
\P \big (N_{t,\8 }(L)>t\big )&\leq 
\P \big (\Pi _{n-1}\big (\sum _{j=1}^{n-1}y_j\big )b_{2,n}>6\pi ^{-1}tk^{-2}\big )\\
&\leq
Cn^{\max (1,\a -\eps )}\E (|y|a)^{\a -\eps }\E |b_2|^{\a -\eps}\big (\E a^{\a -\eps}\big )^{n-2}t^{-(\a -\eps )}k^{2(\a -\eps )}.
\end{align*}
Moreover,
\begin{align*}
\big (\E a^{\a -\eps}\big )^{n_0+k-2}&\leq  e^{(-\rho \eps +C_1\eps ^2)(n_0+k-2)}\\
&\leq Ct^{-\eps } e^{-k\rho \eps +C_1\eps ^2(n_0+k)}.
\end{align*}
Now, taking $\eps =\frac{\rho k}{2C_1(n_0+k)}\leq \eps _0$, we obtain
$$
-k\rho \eps +C_1\eps ^2(n_0+k)= -\frac{\rho ^2 k^2}{4C_1(n_0+k)}.
$$ 
Therefore, 
$$
\P \big (N_{t,\8 }(L)>t\big )
\leq Ct^{-\a }\sum _{k=L+1}^{\8}(n_0+k)^{\max (1,\a -\eps )}k^{2(\a -\eps)}\exp\Big (-\frac{\rho ^2 k^2}{4C_1(n_0+k)}\Big ).
$$
For $k\leq n_0$ and $D\geq 8C_1\rho ^{-3}$, we have  
$$\frac{\rho ^2 k^2}{4C_1(n_0+k)}\geq \frac{\rho ^3D^2\log \log t}{8C_1}>D\log \log t$$ and if $k>n_0$ then 
 $$\frac{\rho ^2 k^2}{4C_1(n_0+k)}\geq \frac{\rho ^2k}{8C_1}\geq \frac{\rho ^2n_0}{8C_1}$$
Hence
$$
\P \big (N_{t,\8 }(L)>t\big )
\leq Ct^{-\a }\Big ((\log t)^{2+3(\a -\eps )-D}+\sum _{k=n_0+1}^{\8}k^{1+3(\a -\eps)}\exp\big (-\frac{\rho ^2 k}{8C_1}\big )\Big ).
$$
Finally, an elementary calculation shows that \eqref{largind} follows provided $D$ is large enough.
\end{proof}
In the same way we prove
\begin{lemma}\label{irrelev}
Assume that \eqref{ass2} is satisfied and $\E (a^{\a +\eps _0}+|b_1|^{\a +\eps _0}+|b_2|^{\a +\eps _0})<\8$.
Then for every $\b , \xi \geq 0$ there is $D$ such that for $j=1,2$
\begin{align}
\P \big (\sum _{n=1}^{n_0-L-1}\Pi _{n-1}|b_{j,n}|>t(\log t)^{-\b}\big )&=O\big (t^{-\a}\log t)^{-\xi}\big )\label{short}\\
\P \big (\sum _{n=n_0+L+1}^{\8 }\Pi _{n-1}|b_{j,n}|>t(\log t)^{-\b}\big )&=O\big (t^{-\a}(\log t)^{-\xi}\big )\label{largen}.
\end{align}
\end{lemma}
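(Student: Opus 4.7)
My plan is to follow the template of the proof of Lemma~\ref{irrelevent} essentially verbatim, noting that the absence of the factor $\sum_{j=1}^{n-1}y_j$ makes the analysis strictly easier: the $n^{\max(1,\a\pm\eps)}\E(|y|a)^{\a\pm\eps}$ contribution that appeared there is simply absent here. The only genuinely new feature is that the threshold is $t(\log t)^{-\b}$ rather than $t$, which after Markov inflates the final polylog exponent by $(\a\pm\eps)\b$; this is a harmless constant to be absorbed by enlarging $D$. Accordingly I do not expect any real obstacle; the main bookkeeping item is to verify that this polylog loss is dominated by the same exponential-in-$\log\log t$ gain that drove the previous proof.

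For \eqref{short}, I would first apply a union bound with a uniform allocation,
\[
\P\bigl(\sum_{n=1}^{n_0-L-1}\Pi_{n-1}|b_{j,n}|>t(\log t)^{-\b}\bigr)\leq \sum_{n=1}^{n_0-L-1}\P\bigl(\Pi_{n-1}|b_{j,n}|>tn_0^{-1}(\log t)^{-\b}\bigr),
\]
and apply Markov's inequality at exponent $\a+\eps$ to each term, giving $(\E a^{\a+\eps})^{n-1}\E|b_j|^{\a+\eps}t^{-(\a+\eps)}n_0^{\a+\eps}(\log t)^{(\a+\eps)\b}$. Writing $n=n_0-k$ with $k\geq L+1$ and choosing $\eps=\eps(k)=\rho k/(2C_1 n_0)$ as in Lemma~\ref{irrelevent}, the core estimate $(\E a^{\a+\eps})^{n_0-k}t^{-\eps}\leq \exp(-\rho^3 D^2(\log\log t)/(4C_1))$ transfers unchanged. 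Summing in $k$ and taking $D$ large enough that $\rho^3 D^2/(4C_1)$ dominates $\xi+\a\b+O(1)$ yields the claim.

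For \eqref{largen}, I would allocate the budget as $6t(\pi k)^{-2}(\log t)^{-\b}$ with $k=n-n_0\geq L+1$, so that the coefficients sum to less than $t(\log t)^{-\b}$, and apply Markov at exponent $\a-\eps$ with $\eps=\rho k/(2C_1(n_0+k))$. This reproduces the superexponential gain $\exp(-\rho^2 k^2/(4C_1(n_0+k)))$ from the previous lemma, which for $L+1\leq k\leq n_0$ exceeds $D\log\log t$ and for $k>n_0$ is linear in $k$. Summing in $k$ and enlarging $D$ further to swallow the extra $k^{2(\a-\eps)}(\log t)^{(\a-\eps)\b}$ factors coming from the modified threshold completes the argument. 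Since the whole proof is a direct and slightly simpler reprise of Lemma~\ref{irrelevent}, the only nontrivial verification is the constant-$\b$ absorption indicated above, which is unconditional in $\b$ once $D$ is taken large enough relative to $\a\b+\xi$.
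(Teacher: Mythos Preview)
Your proposal is correct and is exactly the approach the paper takes: the paper's proof of Lemma~\ref{irrelev} reads in its entirety ``In the same way we prove'', referring back to Lemma~\ref{irrelevent}. Your observation that the missing factor $\sum_{j=1}^{n-1}y_j$ only simplifies matters and that the extra $(\log t)^{\beta(\a\pm\eps)}$ from the modified threshold is absorbed by enlarging $D$ is precisely the point.
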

Lemma \ref{irrelev} will be used in the proof of Lemmas \ref{mainzero} and \ref{Ras} .
\section{The centered case}
In this section we assume that
$\E ya^{\alpha }=0$ and we study asymptotics of the main term 
\begin{equation*}
M_t=M_t(L)=\sum _{n=n_0-L}^{n_0+L}\Pi _{n-1}\big (\sum _{j=1}^{n-1}y_j\big )b_{2,n}
\end{equation*}
in $\Xa _0$. We are going to prove that
$$
\P (M_t >t)\sim t^{-\a }(\log t)^{\a \slash 2}$$
as $t\to \8$, see Lemmas \ref{mainzero} and \ref{offzero}. Let
$$
S_{2L}=\sum_{m=0}^{2L}a_{n_0-L}\cdots  a_{n_0-L+m-1}b_{2,n_0-L+m}
$$
$$
M'_t=a_1\cdots  a_{n_0-L-1}\big (\sum _{j=1}^{n_0-L-1}y_j\big )S_{2L}
$$
$$
M''_t=a_1\cdots  a_{n_0-L-1}\sum _{m=1}^{2L}\big (\sum _{j=n_0-L}^{n_0-L+m-1}y_j\big )a_{n_0-L+1}\cdots  a_{n_0-L+m-1}b_{2,n_0-L+m}
$$
Then
$$
M_t=M'_t+M''_t.$$
$M_t'$ is the main term in the asymptotics of $M_t$ and $M''_t$ is negligible.
For part of our calculations we are going to change the measure.
Namely, let $\F _n$ be the filtration defined by the sequence $(A_n, B_n)$ i.e. $\F _n=\s \big ( (A_j, B_j)_{j\leq n}\big )$. Then the expectation $\E _{\a }$ with respect to the new probability measure $\P _{\a}$ is defined by 
\begin{equation}\label{change}
\E _{\a }f =\E [f a_1^{\a}\cdots a_n^{\a}]\end{equation}
where $f$ is measurable with respect to $\F _n$. Notice that, in view of our assumptions, $\E _{\a }y_j=0$ and $\E _{\a }(\log a -\rho )=0$, $\E_{\a }(\log a)^2<\8$. Moreover, we assume that $\E_{\a }y^2$ is finite. This allows us to apply the central limit theorem to the sequence $(y_n, \log a_n-\rho )$. 
Let $K$ be the covariance matrix of $y$ and $\log a -\rho $ in the changed measure i.e.
$$K= \left ( \begin{array}{cc}

\E_{\a}y^2 &\E_{\a}y(\log a -\rho)\\

\E_{\a}y(\log a -\rho)&\E_{\a}(\log a -\rho)^2 

\end{array}\right ).$$
 We adopt the notation
\begin{equation}\label{wplus}
c_0(K)=\begin{cases}\E Z_1^{\a}\Ind \{Z_1\geq 0\},\ \mbox{where}\ (Z_1,Z_2)\ \mbox{has law}\ \mathcal{N} (0,K) \quad \mbox{if} \ \det K\neq 0, \\
\E Z_1^{\a}\Ind \{Z_1\geq 0\},\ \mbox{where}\ Z_1\ \mbox{has law}\ \mathcal{N} (0,1) \quad \mbox{if} \ \det K= 0.\end{cases}
\end{equation}
Now we are ready to formulate the main lemma.
\begin{lemma}\label{mainzero}
Suppose that assumptions \eqref{ass2}-\eqref{ass4}, \eqref{ass6}, \eqref{ass5} are satisfied
and let $c_+,c_-$ be as in \eqref{righttail} and \eqref{lefttail}.
Assume further that $\E ya^{\a}=0$ and that there  is $r\geq 3$, $r>\a $ such that $\E |y|^ra^{\a }<\8 $. Then 
\begin{equation*}%\label{right}
\lim _{t \to \8}\P ( M'_t>t )t^{\a }(\log t)^{-\a \slash 2 }=(c_++c_-)\rho ^{\a \slash 2}c_0(K).
\end{equation*}
\end{lemma}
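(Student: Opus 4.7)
The plan is a conditioning argument at time $n_0-L-1$, followed by a Cram\'er-type change of measure to $\P_\a$, after which the central limit theorem applied to the random walk $Y:=\sum_{j=1}^{n_0-L-1}y_j$ delivers the constant $c_0(K)$.

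\textbf{Tail of $S_{2L}$ and independence.} The factors $\Pi_{n_0-L-1}$ and $Y$ are $\F_{n_0-L-1}$-measurable, whereas $S_{2L}$ depends only on indices $\geq n_0-L$ and is therefore independent of $\F_{n_0-L-1}$. Moreover $S_{2L}$ has the law of a perpetuity truncated at $2L+1$ terms, and the corresponding full perpetuity $\tilde{\Xa}_2\stackrel{d}{=}\Xa_2$ satisfies $\tilde{\Xa}_2-S_{2L}=a_{n_0-L}\cdots a_{n_0+L}\,\tilde{\Xa}'_2$ with an independent copy $\tilde{\Xa}'_2\stackrel{d}{=}\Xa_2$. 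Since $\E a^{\a-\d}<1$ for small $\d>0$ (because $\E a^\a=1$ and $\rho>0$), one has $\E|\tilde{\Xa}_2-S_{2L}|^{\a-\d}=(\E a^{\a-\d})^{2L+1}\E|\Xa_2|^{\a-\d}$, which is exponentially small in $L$. Combined with Kesten--Goldie \eqref{righttail}--\eqref{lefttail}, this yields, uniformly in $L=L(t)$,
\begin{equation*}
\P(S_{2L}>u)=c_+u^{-\a}(1+o(1)),\qquad \P(S_{2L}<-u)=c_-u^{-\a}(1+o(1)), \quad u\to\infty.
\end{equation*}

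\textbf{Substitution and change of measure.} Conditioning on $\F_{n_0-L-1}$ and inserting these tail asymptotics into the conditional probability $\P(\Pi_{n_0-L-1}YS_{2L}>t\mid \F_{n_0-L-1})$ gives
\begin{equation*}
\P(M'_t>t)\sim t^{-\a}\,\E\!\left[\Pi_{n_0-L-1}^{\a}|Y|^{\a}\bigl(c_+\Ind_{Y>0}+c_-\Ind_{Y<0}\bigr)\right].
\end{equation*}
Since the bracketed factor is $\F_{n_0-L-1}$-measurable, the change of measure \eqref{change} with $n=n_0-L-1$ converts this to
\begin{equation*}
\P(M'_t>t)\sim t^{-\a}\,\E_\a\!\left[|Y|^{\a}\bigl(c_+\Ind_{Y>0}+c_-\Ind_{Y<0}\bigr)\right].
\end{equation*}

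\textbf{CLT and conclusion.} Under $\P_\a$, the $y_j$ are i.i.d.\ with $\E_\a y=\E ya^\a=0$ (by hypothesis) and $\E_\a y^2=K_{11}<\infty$, so the CLT yields $(n_0-L-1)^{-1/2}Y\Rightarrow Z_1\sim\mathcal N(0,K_{11})$. The hypothesis $\E|y|^r a^\a<\infty$ with $r\geq 3$, $r>\a$, ensures $\E_\a|y|^r<\infty$; Marcinkiewicz--Zygmund gives $\sup_n\E_\a|Y/\sqrt n|^r<\infty$ and hence uniform integrability of $|Y/\sqrt n|^\a$. By symmetry of $Z_1$,
\begin{equation*}
\E_\a\!\left[|Y|^{\a}\bigl(c_+\Ind_{Y>0}+c_-\Ind_{Y<0}\bigr)\right]\sim (n_0-L-1)^{\a/2}(c_++c_-)\,c_0(K),
\end{equation*}
and $n_0-L-1\sim\rho^{-1}\log t$ produces the asymptotic stated in the lemma. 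The most delicate step is the uniform transfer of Kesten--Goldie to $S_{2L}$ across the range of the random scale $\tau=\Pi_{n_0-L-1}|Y|$: one must isolate the regime where $t/\tau$ is large enough for KG to apply (typically $\tau=t^{1+o(1)}$ under $\P_\a$) and bound atypical $\tau$ via Lemma \ref{irrelev}-type moment estimates so that no region contaminates the leading constant; the moment convergence in the CLT is the secondary obstacle and is what forces $r>\a$.
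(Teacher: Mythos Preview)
Your strategy—condition on $\F_{n_0-L-1}$, substitute the Kesten--Goldie tail for $S_{2L}$, change measure to $\P_\a$, and apply the CLT to $Y/\sqrt{n}$—is exactly the skeleton of the paper's argument. But the step you yourself flag as ``most delicate'' is the entire content of the proof, and you have not carried it out.

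The formal substitution $\P(M'_t>t)\sim t^{-\a}\E\bigl[\Pi_n^{\a}|Y|^{\a}(c_+\Ind_{Y>0}+c_-\Ind_{Y<0})\bigr]$ is only valid where $t/(\Pi_n|Y|)$ is large, and the complementary region is not automatically negligible either for the left side or the right side. The paper resolves this as follows: (i) it first restricts to $\{\Pi_n\le t\}$ at cost $O(t^{-\a})$; (ii) it truncates $|Y_n|$ to the window $[\dd^{-1}\sqrt n,\ \sqrt n(\log n)^d]$ and bounds the excluded regimes separately (Step~1)—the large-$|Y_n|$ estimate uses Edgeworth expansions (Theorem~\ref{petrov1}) after the change of measure, \emph{not} Lemma~\ref{irrelev}-type bounds as you suggest; (iii) inside the window it replaces $S_{2L}$ by the full perpetuity $S$ (here Lemma~\ref{irrelev} is used, via \eqref{largen}), then splits around the KG threshold $T$ to compare $J((1\pm\eps)t,\dd)$ with $I(n,\dd)=\E_\a\bigl[(n^{-1/2}Y_n)^{\a}\Ind_{\{\dd^{-1}\sqrt n<Y_n<\dd\sqrt n\}}\Ind_{\{\Pi_n\le t\}}\bigr]$ (Step~3), including an estimate of $\mathcal L_2(t,\dd)$ that relies on Petrov's large-deviation Theorem~\ref{petrov2}.

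One further point your sketch misses: after the change of measure the constraint $\Pi_n\le t$ becomes $\Ind\{n^{-1/2}\sum(\log a_j-\rho)\le \rho(n_0-n)/\sqrt n\}$, and $\sum(\log a_j-\rho)$ is correlated with $Y_n$ under $\P_\a$. The paper therefore runs a \emph{bivariate} CLT for $(n^{-1/2}Y_n,\,n^{-1/2}S_n)$ in Step~2; since $(n_0-n)/\sqrt n\to\infty$ the second indicator drops out and $c_0(K)$ depends only on the $Z_1$-marginal, so your univariate route gives the same constant, but only after you have separately justified removing $\Ind\{\Pi_n\le t\}$. Your Marcinkiewicz--Zygmund uniform-integrability argument for the moment convergence is correct and is a legitimate alternative to the paper's dyadic slicing in Step~2.
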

\begin{proof}
We choose $L=L(t)$ such that \eqref{largen} is satisfied with $\xi =\a $. To simplify the notation, in this proof we will write 
$$n=n(t)=n_0-L-1=\frac{\log t}{\rho}-L(t)-1.$$ Notice that
in view of \eqref{L}
\begin{equation*}
\lim _{t\to \8}\frac{n}{\log t}=\rho 
\end{equation*}
and we will often write $n$ in place of $\log t$ in various expressions related to the asymptotics of $M'_t$.
Let
$$
\Y _n=\sum _{j=1}^ny_j. $$%,\quad \Pi _n=a_1\cdots a_n$$
Then 
$$
M'_t=\Pi _n \Y _nS_{2L}.
$$
\bigskip
 Since $\P (\Pi _n>t)\leq t^{-\a}$ it is enough to prove that
$$
\lim _{t\to \8}\P (M'_t>t, \Pi _n\leq t)t^{\a }(\log t)^{-\a \slash 2 }=(c_++c_-)\rho ^{\a \slash 2}c_0(K).$$
First we sketch the main steps of the proof. Then in Steps 1-3 below we do the detailed caculations. Finally, in Step 4 we conclude.

\medskip
{\bf Step 0. The outline of the proof.}
For a fixed $\dd >1$ (independent of $t$) and $d>(2(r-\a ))^{-1}$ we write
\begin{align*}
I_0(t,\dd )=&\P \left ( M'_t>t,\  \dd ^{-1}\sqrt{n}< |\Y _n|< \sqrt{n}(\log n)^d,\ \Pi _n\leq t\right )\\
I_1(t,\dd )=&\P \left ( M'_t>t,\  |\Y _n|\leq \dd ^{-1}\sqrt{n},\ \Pi _n\leq t\right )\\
I_2(t,\dd)=&\P \left ( M'_t>t,\ |\Y _n|\geq \sqrt{n}(\log n)^d,\  \Pi _n\leq t \right )
\end{align*}
and in Step 1 we prove that
\begin{align}
I_1(t,\dd )\leq &C t^{-\a }n^{\a \slash 2}\dd ^{-\a} \label{smallyy}\\
I_2(t,\dd)\leq &C t^{-\a }n^{\a \slash 2}(\log n)^{-d(r-\a)+1\slash 2} \label{largeyy}.
\end{align}
Above and in the rest of the proof all the constants do not depend on $t$ and $\dd$. 
\eqref{smallyy} and \eqref{largeyy} show that only $|\Y _n| $ ``close'' to $\sqrt{n}$ play the role.
Let
$$
S=\sum _{m=0}^{\8 }a_{n_0-L}\cdots a_{n_0-L+m-1}b_{2,n_0-L+m}
$$
Then $S$ is a perpetuity independent of $\Pi _n \Y _n$ and satisfying
$$
\lim _{n\to \8}\P (S>t)t^{-\a}=c_+.$$
Therefore, it is convenient to replace $S_{2L}$ by $S$ i.e.
 to compare the main term $I_0(t,\dd)$ with
$$
J((1\pm \eps )t,\dd )=\P \Big ( \Pi _n \Y _n S>(1\pm \eps )t,\ \dd ^{-1}\sqrt{n}< |\Y _n|< \sqrt{n}(\log n)^d,\ \Pi _n\leq t\Big )$$
Let 
$$
 H(\eps t,\dd )=\P \Big ( \Pi _n \Y _n|S-S_{2L}|>\eps t,\ \dd ^{-1}\sqrt{n}< |\Y _n|< \sqrt{n}(\log n)^d,\ \Pi _n\leq t\Big )$$ 
Then for every $\eps >0$
$$
J((1+\eps )t,\dd )-H(\eps t,\dd)\leq I_0( t,\dd)\leq J((1-\eps )t,\dd )+H(\eps t,\dd)$$  
Notice that $\Pi_n|S-S_{2L}|\leq \sum _{m=L}^{\8}\Pi_{n_0+m}|b_{2,n_0+m+1}|$. Hence
by \eqref{largen} and our choice of $L(t)$, we have
\begin{equation}\label{H}
H(\eps t)\leq \P \Big ( \Pi _n |S- S_{2L}|>\eps t n ^{-\jd}(\log n)^{-d}\Big )\leq C\eps ^{-\a }t^{-\a}. 
\end{equation}
Then by \eqref{smallyy}, \eqref{largeyy} for $\xi =d(r-\a)-1\slash 2>0$ and \eqref{H}, we have
\begin{align*}
&J((1+\eps )t,\dd )t^{\a}n^{-\a \slash 2}-C\eps ^{-\a }n^{-\a \slash 2}\leq \P (M'_t>t, \Pi _n\leq t)t^{\a}n^{-\a \slash 2}\\
&\leq J((1-\eps )t,\dd )t^{\a}n^{-\a \slash 2}+C\dd ^{-\a}+C(\log n)^{-\xi}+C\eps ^{-\a }n^{-\a \slash 2}.
\end{align*}
Suppose now we can prove that
\begin{equation}\label{J}
\lim _{\dd \to \8}\lim _{t\to \8}J((1\pm \eps )t,\dd )t^{\a}n^{-\a \slash 2}=(c_++c_-)c_0(K)(1\pm \eps)^{-\a}. \end{equation}
Then Lemma \ref{mainzero} follows.
The crucial quantity in getting \eqref{J} is 
\begin{equation}\label{defI}
I(n,\dd )=\E \Pi _n^{\a}\big (n^{-\jd}\Y _n\big )^{\a }\Ind {\{ \dd ^{-1}\sqrt{n}< \Y _n < \dd \sqrt{n}\} }\Ind \{\Pi _n\leq t \}.
\end{equation}
In Step 2 we prove that 
\begin{equation}\label{central}
\lim _{\dd \to \8}\lim _{t\to \8}I(n,\dd)=c_0(K).
\end{equation}
Then we estimate the error
\begin{equation}\label{error}
|J((1\pm \eps )t,\dd )t^{\a}n^{-\a \slash 2}-(c_++c_-)I(n,\dd)(1\pm \eps )^{-\a}|
\end{equation}
well enough, see \eqref{estimate} in Step 3. Finally, in Step 4, we conclude \eqref{J}.

\medskip
{\bf Step 1. Proof of \eqref{smallyy}, \eqref{largeyy}.}\
Fix $\dd >1$ and suppose that $|\Y _n|\leq \sqrt{n}\dd ^{-1}$. In view of \eqref{righttail}, \eqref{lefttail} we have
\begin{equation}\label{smally}
\P (|\Y _n|\leq \sqrt{n}\dd ^{-1}, \Pi _{n}|\Y _nS_{2L}|>t )
\leq \P (\Pi _{n}|S_{2L}|>tn^{-1\slash 2}\dd  )
\leq C \ t^{-\a }n^{\a \slash 2}\dd ^{-\a} ,  
\end{equation}
which can be made arbitrarily small provided $\dd $ is large enough.
Now we consider large $\Y _n$. For a fixed $d>(2(r-\a))^{-1}$, we define the sets
\begin{align*}
W_k&=\{ e^{k-1} \sqrt{n} < |\Y _n|\leq e^k \sqrt{n}\}, \ k\geq k_0=d\log \log n\\
Z_0&=\{ |S_{2L}|\leq 1\}\\
Z_p&=\{ e^{p-1} < |S_{2L}|\leq e^p \}, \ p\geq 1.\\
\end{align*}
and we estimate 
$$
\P (\Pi _n |\Y _n||S_{2L}|>t, |\Y _n|> (\log n)^{k_0-1}\sqrt{n} )=
\sum _{k\geq k_0, p\geq 0}\P \big (\{ \Pi _n |\Y _n||S_{2L}|>t\} \cap W_k \cap Z_p\big ).
$$
We are going to change the measure (see \eqref{change}) and to prove that
\begin{equation}\label{largey}
\P (\Pi _n |\Y _n||S_{2L}|>t, |\Y _n|> (\log n)^{k_0-1}\sqrt{n} )=o\big (t^{-\a }(\log t)^{\a \slash 2}\big ).
\end{equation}
For fixed $k,p$ we have
\begin{align*}
\P \Big (\{ \Pi _n |\Y _n||S_{2L}|>t\} \cap W_k \cap Z_p\Big )&\leq
\P \Big (\{ \Pi _n|\Y _n|>te^{-p}\} \cap W_k \Big )\P (Z_p)\\
&\leq \P \Big (\{ |\Y _n|> e^{k-1}\sqrt{n}, \Pi _n>te^{-k-p}n^{-1\slash 2} \}  \Big )\P (Z_p)\\
&\leq \P (Z_p) \E _{\a }\Ind \{|\Y _n|> e^{k-1}\sqrt{n}\}\Ind \{\Pi _n>te^{-k-p}n^{-1\slash 2} \}\Pi _n ^{-\a}\\
&\leq t^{-\a }n^{\a \slash 2}e^{(k+p)\a}\P _{\a } \big ( |\Y _n|>e^{k-1}\sqrt{n}\big ) \P (Z_p).
\end{align*} 
To estimate the last term we use Edgeworth expansions: Theorem \ref{petrov1}. The latter says that there is a constant $C_1=C_1(r,\E _{\a}y^2, \E _{\a}|y|^3,\E _{\a}|y|^r)$ such that
$$
\P _{\a } \big (|\Y _n|>e^{k-1}\sqrt{n}\big )\leq C_1 n^{-\jd} e^{-r(k-1)}.
$$
Moreover, if $p\geq 3\rho L$ then by \eqref{short} there is $C_2$ such that
$$
\P (S_{2L}>e^{p-1})\leq C_2 e^{-\a p}p^{-2}.$$
Hence
\begin{align*}
\sum _{p\geq 3\rho L,k\geq k_0} \P \Big (\{ \Pi _n|\Y _n||S_{2L}|>t\} \cap W_k\cap Z_p \Big )&\leq C_1C_2\sum _{p\geq 3\rho L,k\geq k_0}t^{-\a }n^{\a\slash 2 }e^{(k+p)\a} n^{-\jd} e^{-r(k-1)}e^{-\a p}p^{-2}\\
&\leq C_3 t^{-\a }n^{\a \slash 2} n^{-\jd}.
\end{align*}
If $p< 3\rho L$ then, by \eqref{righttail} $\P (S_{2L} >e^{p-1}) \leq C_4e^{-\a p}$ and so 
\begin{align*}
\sum _{p< 3\rho L,k\geq k_0} \P \Big (\{ \Pi _n|\Y _n||S_{2L}|>t\} \cap W_k\cap Z_p \Big )&\leq C_1C_4L\sum _{k\geq k_0}n^{-\jd}e^{-r(k-1)}t^{-\a }n^{\a\slash 2 } e^{\a k} \\
&\leq C_5 t^{-\a }n^{\a \slash 2}(\log n)^{-d(r-\a )+\jd} .
\end{align*}
Hence \eqref{largey} follows.

\bigskip
{\bf Step 2. Proof of \eqref{central}.}\
Let $S_n=\sum _{j=1}^n(\log a_j -\rho )$. 
We are going to apply the central limit theorem to the sequence 
$$
\Big (\frac{\Y _n}{\sqrt{n}}, \frac{S_n}{\sqrt{n}}\Big ).$$ 
Since $\log t=\rho n_0=\rho n +\rho (n_0-n)$, we have
\begin{equation*}
I(n,\dd )=\E _{\a }\big (n^{-1\slash 2}\Y _n\big )^{\a }\Ind {\{ \dd ^{-1}\sqrt{n}< \Y _n < \dd \sqrt{n}\} }\Ind \left \{\frac{S_n}{\sqrt{n}}\leq \frac{\rho(n_0-n)}{\sqrt{n}}\right \}.
\end{equation*}
Notice that $\frac{\rho(n_0-n)}{\sqrt{n}}\to \8$ when $n\to \infty $. For
the covariance matrix $K$ of the variables $y_1$ and $\tilde X_1$, we distinguish two cases: $\det K\neq 0$ and $\det K=0$. 

Suppose first that $\det K\neq 0$. Let $\theta >1$ but close to $1$ and let $M>0$ be large. Define
$$
j_0=\min \{ j: \theta ^j\geq \dd ^{-1}\}\quad \mbox{and}\quad  j_1=\max \{ j: \theta ^j\leq \dd\}.
$$
Then
$$
I(n,\dd )\leq \sum _{j=j_0-1}^{j_1 }\theta ^{(j+1)\a }\E _{\a }\Ind \{\theta ^j\leq \frac{\Y _n}{\sqrt{n}}\leq \theta ^{j+1}\}=U(n,\theta, \dd).
$$
and, for sufficiently large $n$, 
$$
I(n,\dd )\geq \sum _{j=j_0}^{j_1-1 }\theta ^{j\a }\E _{\a }\Ind \{\theta ^j\leq \frac{\Y _n}{\sqrt{n}}\leq \theta ^{j+1}\}\Ind \{\frac{S_n}{\sqrt n}\leq M\}=L(n,\theta ,\dd).
$$
For a fixed $j$, we have
$$
\E _{\a }\Ind \{\theta ^j\leq \frac{\Y _n}{\sqrt{n}}\leq \theta ^{j+1}\}\to \frac{1}{2\pi \sqrt{\det K}}\int _{\theta ^j}^{ \theta ^{j+1}}\int _{\R }\exp \big (-\frac{1}{2}\langle z,K^{-1}z\rangle \big )\ dz, \quad \mbox{as}\ n\to \infty ,
$$
where $dz=dz_1dz_2$, and
$$
\E _{\a }\Ind \{\theta ^j\leq \frac{\Y _n}{\sqrt{n}}\leq \theta ^{j+1}\}\Ind \{\frac{S_n}{\sqrt n}\leq M\}\to \frac{1}{2\pi \sqrt{\det K}}\int _{\theta ^j}^{ \theta ^{j+1}}\int _{-\infty }^M\exp \big (-\frac{1}{2}\langle z,K^{-1}z\rangle \big )\ dz, \quad \mbox{as}\ n\to \infty.
$$
Hence
$$
\lim _{n\to \infty }U(n,\theta, \dd )=\sum _{j=j_0-1}^{j_1}
\frac{\theta ^{(j+1)\a }}{2\pi \sqrt{\det K}}\int _{\theta ^j}^{ \theta ^{j+1}}\int _{\R }\exp \big (-\frac{1}{2}\langle z,K^{-1}z\rangle \big )\ dz
$$
and so 
\begin{align*}
0\leq \lim _{n\to \infty }U(n,\theta, \dd )&-\frac{1}{2\pi \sqrt{\det K}}\int _{\theta ^{j_0-1}}^{ \theta ^{j_1+1}}\int _{\R }z_1^{\a}\exp \big (-\frac{1}{2}\langle z,K^{-1}z\rangle \big )\ dz\\
&\leq (\theta ^{\a }-1)
\frac{1}{2\pi \sqrt{\det K}}\int _{\theta ^{-1}\dd ^{-1}}^{ \theta \dd}\int _{\R }z_1^{\a}\exp \big (-\frac{1}{2}\langle z,K^{-1}z\rangle \big )\ dz\\
&\leq (\theta ^{\a }-1)
\frac{1}{2\pi \sqrt{\det K}}\int _0^{\infty }\int _{\R }z_1^{\a}\exp \big (-\frac{1}{2}\langle z,K^{-1}z\rangle \big )\ dz.
\end{align*}
In the same way we prove that
$$
\lim _{n\to \infty }L(n,\theta, \dd )=\sum _{j=j_0}^{j_1-1}
\frac{\theta ^{j\a }}{2\pi \sqrt{\det K}}\int _{\theta ^j}^{ \theta ^{j+1}}\int _{-\8 }^M\exp \big (-\frac{1}{2}\langle z,K^{-1}z\rangle \big )\ dz
$$
and
\begin{align*}
0\leq \frac{1}{2\pi \sqrt{\det K}}\int _{\theta ^{j_0}}^{ \theta ^{j_1}}&\int _{-\8 }^Mz_1^{\a}\exp \big (-\frac{1}{2}\langle z,K^{-1}z\rangle \big )\ dz-\lim _{n\to \infty }L(n,\theta, \dd )\\
&\leq (\theta ^{\a }-1)
\frac{1}{2\pi \sqrt{\det K}}\int _0^{\infty }\int _{\R }z_1^{\a}\exp \big (-\frac{1}{2}\langle z,K^{-1}z\rangle \big )\ dz.
\end{align*}
Therefore, for every $\d >0$ there is $\theta _0$ such that for $1<\theta \leq \theta _0$
\begin{align*}
-\d +\frac{1}{2\pi \sqrt{\det K}}\int _{\theta \dd ^{-1}}^{\theta ^{-1}\dd }\int _{-\8}^M z_1^{\a}&\exp \big (-\frac{1}{2}\langle z,K^{-1}z\rangle \big )\ dz\leq \liminf _{n\to \8}I(n,\dd)\\
\leq \limsup _{n\to \8}I(n,\dd)\leq \d +&
\frac{1}{2\pi \sqrt{\det K}}\int _{\theta ^{-1}\dd ^{-1}}^{\theta \dd }\int _{\R }z_1^{\a}\exp \big (-\frac{1}{2}\langle z,K^{-1}z\rangle \big )\ dz.
\end{align*}
Now, letting $\theta \to 1$, $M\to \8 $ and $\d \to 0$ we obtain
that 
\begin{equation}\label{constD}
\lim _{n\to \infty} I(n,\dd )=\frac{1}{2\pi \sqrt{\det K}}\int _{\dd ^{-1}}^{\dd}z_1^{\a }\int _{\R}\exp \big (-\frac{1}{2}\langle z,K^{-1}z\rangle \big )\ dz =c_0(\dd ,K) .
\end{equation}
Finally, letting $\dd \to \infty $, we obtain \eqref{central}. 

If $\det K=0$ then $y=\lambda (\log a -\rho )$, $\Ind \{\Pi _n\leq t\}$ in \eqref{defI} doesn't bring any restriction and so 
$$
I(n,\dd )=\E _{\a }\big (n^{-1\slash 2}\Y _n\big ) ^{\a }\Ind {\{\dd ^{-1}\leq \frac{\Y _n}{\sqrt{n}}\leq \dd \}}.
$$ 
As before, 
\begin{equation}\label{constDD}
I(n,\dd ) \to  (2\pi)^{-1\slash 2}\int _{\dd ^{-1}}^{\dd }z_1^{\a }\exp \big (-\frac{1}{2}z_1^2\big )\ dz_1=c_0(\dd).
\end{equation}
Finally, letting $\dd \to \infty $ %and taking into account ...
we obtain \eqref{central}. Further on we will use notation $c_0(\dd,K)$ in both cases.
%\begin{align*}
%t^{\a }n^{-\a \slash 2}\P (M'_t>t)& \to \frac{c_+}{\sqrt{2\pi}}\int _{\R ^+}z_1^{\a }
%\exp \big (-\frac{1}{2}z_1^2\big )\ dz_1\\
%&+ \frac{c_-}{\sqrt{2\pi}}\int _{\R ^-}|z_1|^{\a }\exp \big (-\frac{1}{2}z_1^2\big )\ dz_1.
%\end{align*}

\bigskip
{\bf Step 3. Estimate of \eqref{error}.}
Let
$$
J^+((1+\eps )t,\dd )=\P \Big ( \Pi _n \Y _n S>(1+\eps )t,\ \dd ^{-1}\sqrt{n}< \Y _n< \sqrt{n}(\log n)^d,\ \Pi _n\leq t\Big )$$
$$
J^-((1+\eps )t,\dd )=\P \Big ( \Pi _n \Y _n S>(1+\eps )t,\ - \sqrt{n}(\log n)^d<\Y _n<-\dd ^{-1}\sqrt{n},\ \Pi _n\leq t\Big )$$
Then $J((1+\eps )t,\dd )=J^+((1+\eps )t,\dd )+J^-((1+\eps )t,\dd ).$
We have
$$
J^+((1+\eps)t,\dd)=\int _{\R ^+\times \R }\Ind _{U_n}\P \left (S>(ay)^{-1}(1+\eps )t\right )\Ind \{a \leq t\}\ d\mu _n(a,y),
$$
where $U_n=\{ \dd ^{-1}\sqrt{n}\leq y \leq \sqrt{n}(\log n)^d\}$ and $\mu _n$ is the law of $(\Pi _n , \Y _n)$. 
%$$
%{\bf 1}=\Ind {\{ \dd ^{-1}\sqrt{n}\leq \Y _n \leq \sqrt{n}(\log n)^d \} }, \quad \mbox{and}\quad Z_n={\bf 1}\big %(n^{-\jd}\Y _n\big )^{\a }.$$ 

\medskip
First we compare $t^{\a}n^{-\a \slash 2}J^+((1+\eps)t,\dd )$ with $c_+\mathcal{L}(t,\dd)(1+\eps)^{-\a}$, where
\begin{equation*}
\mathcal{L}(t,\dd)= \E \Pi _n^{\a}(n^{-\jd}\Y _n\big )^{\a }\Ind _{U_n}\Ind \{\Pi _n\leq t \}.
\end{equation*}
Given $\eta >0$, we choose $T$ such that 
$$
|\P (S>s)s^{\a}-c_+|< \eta$$
for $s>T$.
Let
$$
P (a,y,t(1+\eps ))= \P (S>(ay)^{-1}(1+\eps) t)t^{\a}(1+\eps)^{\a}(ay)^{-\a}.$$
Notice that if $t(ya)^{-1}>T$ then $|P (a,y,t(1+\eps ))-c_+|<\eta $. We have
\begin{align*}
t^{\a }n^{-\a \slash 2}J^+((1+\eps)t,\dd)=&(1+\eps)^{-\a}\int _{\R ^+\times \R }{\bf 1}_{U_n}P (a,y,t(1+\eps ))\\
&n^{-\a \slash 2} \Ind \{a \leq t\} \Ind \{ya < tT^{-1}\}\ (ay)^{\a}\ d\mu _n(a,y)\\
+&(1+\eps)^{-\a}\int _{\R ^+\times \R }{\bf 1}_{U_n}P (a,y,t(1+\eps ))\\
&n^{-\a \slash 2} \Ind \{ a \leq t\} \Ind \{ ya \geq tT^{-1}\}\ (ay)^{\a}\ d\mu _n(a,y)
\end{align*}
and we decompose $\mathcal{L}(t,\dd)$ accordingly. More precisely, 
\begin{equation*}
\mathcal{L}(t,\dd)=\mathcal{L} _1(t,\dd)+\mathcal{L} _2(t,\dd),
\end{equation*}
where
\begin{align*}
\mathcal{L} _1(t,\dd):=& \E \Pi _n^{\a}(n^{-\jd}\Y _n\big )^{\a }\Ind _{U_n}\Ind \{\Pi _n\leq t \}\Ind \{\Pi _n\Y _n<t T^{-1}\}\\
=& \int _{\R ^+\times \R }{\bf 1}_{U_n}n^{-\a \slash 2}\Ind \{a \leq t\}\Ind \{ya <tT^{-1}\}\ (ay)^{\a}\ d\mu _n(a,y)
\\
\mathcal{L} _2(t,\dd):=&\E \Pi _n^{\a}(n^{-\jd}\Y _n\big )^{\a }\Ind _{U_n}\Ind \{\Pi _n\leq t \}\Ind \{\Pi _n\Y _n\geq t T^{-1}\}\\
=& \int _{\R ^+\times \R }{\bf 1}_{U_n}n^{-\a \slash 2}\Ind \{a \leq t\}\Ind \{ya \geq tT^{-1}\}\ (ay)^{\a}\ d\mu _n(a,y)
\end{align*}
and so
\begin{multline}%\label{difference}
|t^{\a }n^{-\a \slash 2}J^+((1+\eps)t,\dd)-c_+\mathcal{L}(t,\dd)(1+\eps)^{-\a}|\\
\leq \eta \mathcal{L} _1(t,\dd)(1+\eps)^{-\a}
+C(1+\eps)^{-\a}\mathcal{L} _2(t,\dd)\\
\leq \eta \mathcal{L} _1(t,\dd)(1+\eps)^{-\a}+C(\log n +\log (T\dd))n^{-1\slash 4}.
\label{difference}
\end{multline}
Let $V_n=\{tT^{-1}n^{-1\slash 2}(\log n)^{-d}\leq \Pi _n\leq t \}$. To prove the last inequality, in \eqref{difference} we write 
\begin{align*}
%&\E \Pi _n^{\a}\Ind {\{ \dd ^{-1}\sqrt{n}\leq \Y _n \leq \dd \sqrt{n}(\log n)^d\} }\big (n^{-1\slash 2}\Y %_n\big )^{\a }\Ind {\Pi _n\Y _n\geq tT^{-1}}\Ind {\Pi _n\leq t }\\
\mathcal{L} _2(t,\dd)
&\leq \E \Pi _n^{\a}(n^{-\jd}\Y _n\big )^{\a }\Ind _{U_n}\Ind _{V_n}\\
&\leq  (\log n)^{\a d}  \E \Pi _n^{\a}\Ind _{V_n}\leq C (\log n+\log (T))n^{-1\slash 4}.
\end{align*}
Indeed, by \ref{petrov2}, %for $n$ sufficiently large
\begin{align*}
\E \Pi _n^{\a}\Ind _{V_n}&\leq 
\sum _{m=0}^{\log n +\log (T)}\E \Pi _n^{\a}\Ind \{te^{-m-1}\leq \Pi _n \leq te^{-m}\}\\
&\leq \sum _{m=0}^{\log n +\log (T) }t^{\a }e^{-\a m}\P (\Pi _n\geq te^{-m-1} )
\leq C (\log n +\log (T))n^{-1\slash 2}.
\end{align*}
Now it remains to replace $\mathcal{L} (t,\dd)$ by $I(n,\dd)$. We have
\begin{multline}\label{LI}
\mathcal{L}(t,\dd)-I(n,\dd )=
\E \Pi _n^{\a}\big (n^{-\jd}\Y _n\big )^{\a }\Ind {\{ \dd \sqrt{n}\leq \Y _n \leq \sqrt{n}(\log n)^d\} }\\
\leq C\frac{r}{r-\a}D^{-(r-\a)}.
\end{multline}
For \eqref{LI} we write 
$$
\E \Pi _n^{\a}\big (n^{-\jd}\Y _n\big )^{\a }\Ind {\{ \dd \sqrt{n}\leq \Y _n \leq \sqrt{n}(\log n)^d\} }\leq \int _{\dd}^{\infty }s^{\a}\ dF_n(s),
$$
where $F_n$ is the distribution function of $\frac{\Y _n}{\sqrt{n}}$ with respect to the changed measure. Let $\bar F_n=1-F_n$. Then by Theorem \ref{petrov1} for $s\geq D$,
\begin{equation*}
\bar F_n(s)\leq C s^{-r}.
\end{equation*}
Hence
%\begin{equation}
%\bar F_n(s)s^{\a}\leq C s^{-(r-\a )}\leq C{\dd} ^{-(r-\a )}.
%\end{equation}
%Therefore
\begin{align*}
\int _{\dd}^{\8 }s^{\a}\ d F_n(s)&=-\bar F_n(s)s^{\a} \big |_{\dd}^{\8}+\a  \int _{\dd}^{\infty}s^{\a -1} \bar F_n(s)\ ds\\
&\leq C{\dd} ^{-(r-\a )} + \a C \int _{\dd }^{\8} s^{\a - r -1}\ ds\\
&\leq  C\frac{r}{r-\a}{\dd} ^{-(r-\a )}
\end{align*}
and \eqref{LI} follows. Therefore, in view of \eqref{difference} and \eqref{LI}, for every $\eta $
\begin{align*}
|t^{\a }n^{-\a \slash 2}J^+((1+\eps)t,\dd)&-c_+I(n,\dd)(1+\eps)^{-\a}|\leq \eta I(n,\dd)(1+\eps)^{-\a}\\
+C(\log n&+\log (T))n^{-1\slash 4}+C(1+\eps)^{-\a}{\dd} ^{-(r-\a )} .
\end{align*}
For $$|t^{\a }n^{-\a \slash 2}J^-((1+\eps)t,\dd)-c_-I(n,\dd)(1+\eps)^{-\a}|$$
we obtain the same bound. Hence
\begin{multline}\label{estimate}
|t^{\a }n^{-\a \slash 2}J((1+\eps)t,\dd)-(c_++c_-)I(n,\dd)(1+\eps)^{-\a}|\leq 2\eta I(n,\dd)(1+\eps)^{-\a}\\
+C(\log n+\log (T))n^{-1\slash 4}+C(1+\eps)^{-\a}{\dd} ^{-(r-\a )} .
\end{multline}

\medskip
{\bf Step 4. Conclusion.}\
In view of \eqref{constD} and \eqref{constDD}
$$
\lim _{t\to \8}I(n,\dd )=c_0(\dd , K)\leq c_0(K)$$
%Hence $$\sup _t\mathcal{L}(t,\dd)\leq c_0(K)+C{\dd} ^{-(r-\a )}$$
Hence letting first $t\to \8$ then $\eta \to 0$ in \eqref{estimate} we have
$$
\limsup _{t\to \8}t^{\a }n^{-\a \slash 2}J((1+\eps)t,\dd)\leq (c_++c_-)c_0(\dd ,K)(1+\eps )^{-\a}+C(1+\eps)^{-\a}{\dd} ^{-(r-\a )}.$$
and
$$
\liminf _{t\to \8}t^{\a }n^{-\a \slash 2}J((1+\eps)t,\dd)\geq (c_++c_-)c_0(\dd ,K)(1+\eps )^{-\a}-C(1+\eps)^{-\a}{\dd} ^{-(r-\a )}$$
But
$$\lim _{\dd \to \8}c_0(\dd ,K)=c_0(K).$$
Hence
$$
\lim _{\dd\to \8}\lim _{t\to \8}t^{\a }n^{-\a \slash 2}J((1+\eps)t,\dd)=(c_++c_-)c_0(K)(1+\eps )^{-\a}.$$
In the same way we prove
$$
\lim _{\dd\to \8}\lim _{t\to \8}t^{\a }n^{-\a \slash 2}J((1-\eps)t,\dd)=(c_++c_-)c_0(K)(1-\eps )^{-\a}$$
and \eqref{J} follows. \end{proof}
Now using Edgeworth expansions (Theorem \ref{petrov1}) we can estimate $M''_t$.  
\begin{lemma}\label{offzero}
Suppose that assumptions \eqref{ass1}-\eqref{ass4}, \eqref{ass5} are satisfied.
Assume further that $\E ya^{\a }=0$ and that there is $r>2\a +1$,  $r\geq 3$, such that $\E |y|^ra^{\a }<\infty $. Then there are $C>0$ and $\b <\frac{\a}{2} $ such that
\begin{equation}\label{off}
\P ( | M''_t|>t )\leq Cn^{\b }t^{-\a }.
\end{equation}
If $r>\a $ but not necessarily $r> 2\a +1$ then we have \eqref{off} with $\b <\a$. 
\end{lemma}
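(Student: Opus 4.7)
My plan is to exploit the independence in the factorization $M''_t=\Pi_n\,W$, with $\Pi_n=a_1\cdots a_{n_0-L-1}$ and
$$
W=\sum_{m=1}^{2L}\Pi'_m\,Y^{(m)}\,b'_{m+1},
$$
where I abbreviate $\Pi'_m=a_{n_0-L}\cdots a_{n_0-L+m-1}$, $Y^{(m)}=y_{n_0-L}+\cdots+y_{n_0-L+m-1}$, $b'_{m+1}=b_{2,n_0-L+m}$. Since $\Pi_n$ and $W$ depend on disjoint index blocks, they are independent. Markov with exponent $\alpha$ together with $\E\Pi_n^\alpha=(\E a^\alpha)^n=1$ gives
$$
\P(|M''_t|>t)\le t^{-\alpha}\,\E|W|^\alpha,
$$
so the task reduces to showing $\E|W|^\alpha\le C L^{\alpha-\varepsilon}$ for some $\varepsilon>0$ (since $L^\alpha\sim n^{\alpha/2}(\log n)^{\alpha/2}$, this gives $n^\beta$ with $\beta<\alpha/2$). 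A naive Minkowski bound yields only $\|W\|_\alpha\le CL^{3/2}$, too crude, so cancellation inside the sum must be extracted.

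The cancellation comes from two ingredients. First, I would condition on the $\sigma$-algebra $\mathcal{A}$ generated by the $a_k,y_k$. Writing $W=\sum_m c_m b'_{m+1}$ with $c_m=\Pi'_m Y^{(m)}$ being $\mathcal{A}$-measurable and the $b'_{m+1}$ i.i.d. independent of $\mathcal{A}$, Rosenthal's inequality applied conditionally decomposes
$$
\E\bigl[\,|W-\mu_b\textstyle\sum c_m|^\alpha\,\big|\,\mathcal{A}\bigr]\le C\bigl(\textstyle\sum c_m^2\bigr)^{\alpha/2}+C\sum|c_m|^\alpha,
$$
where $\mu_b=\E b_2$. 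The individual moments are controlled by the key tilt: $\E|c_m|^\alpha=\E_\alpha|Y^{(m)}|^\alpha$, and under $\P_\alpha$ the walk $Y^{(m)}$ is a sum of $m$ centered i.i.d.\ increments (centering uses $\E ya^\alpha=0$) with finite $r$-th moment. Marcinkiewicz--Zygmund then yields $\E_\alpha|Y^{(m)}|^\alpha\le Cm^{\alpha/2}$, so $\sum_m\E|c_m|^\alpha\le CL^{\alpha/2+1}$ and, by power mean, $\E(\sum c_m^2)^{\alpha/2}\le CL^\alpha$.

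For the conditional-mean piece $\mu_b\sum_m c_m$ I would reindex
$$
\textstyle\sum_m c_m=\sum_j y'_j T'_j,\qquad T'_j=\sum_{m\ge j}\Pi'_m,
$$
so that each $y'_j$ appears only once. Factoring $T'_j=\Pi'_{j-1}a'_j U_j$ with $U_j$ independent of $(y'_j,a'_j)$ and of $\Pi'_{j-1}$ puts this sum into a form where a second round of the tilt-plus-Marcinkiewicz--Zygmund argument applies. The decisive input is the Edgeworth expansion of Theorem~\ref{petrov1} applied to $Y^{(m)}$ under $\P_\alpha$: the refined tail bound $\P_\alpha(|Y^{(m)}|>x\sqrt m)\le Cm^{-1/2}x^{-r}$ provides an extra $n^{-1/2}$ savings that drops the final exponent strictly below $\alpha/2$ when $r>2\alpha+1$. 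If only $r>\alpha$ is assumed, the Edgeworth refinement is unavailable but the same scheme still delivers $\beta<\alpha$, matching the weaker statement.

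The main obstacle is the interlocking correlation between $\Pi'_m$ and $Y^{(m)}$, which share indices $n_0-L,\dots,n_0-L+m-1$, and between different $c_m$'s, which share most of their factors: $W$ is neither a martingale nor has independent summands. Handling the conditional-mean term $\mu_b\sum c_m$ without losing factors of $L$ is the technical heart of the argument, and it is precisely this term that forces the use of the Edgeworth refinement and the sharper moment condition $r>2\alpha+1$.
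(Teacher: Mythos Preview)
Your factorization $M''_t=\Pi_n W$ with $\Pi_n\perp W$ is correct, but the route through the Markov bound $\P(|M''_t|>t)\le t^{-\alpha}\E|W|^\alpha$ cannot deliver $\beta<\alpha/2$: the target inequality $\E|W|^\alpha\le CL^{\alpha-\varepsilon}$ is simply false. Take $\alpha=2$ and $\mu_b=\E b_2=0$, so the conditional--mean piece vanishes and your own Rosenthal step is exact. The terms $c_m b'_{m+1}$ are then orthogonal and
\[
\E|W|^2=\E b_2^2\sum_{m=1}^{2L}\E c_m^2
=\E b_2^2\sum_{m=1}^{2L}\E_\alpha (Y^{(m)})^2
=\E b_2^2\,\E_\alpha y^2\sum_{m=1}^{2L}m\ \sim\ C\,L^2\ \sim\ C\,n\log n,
\]
which is of order $n^{\alpha/2}$ up to a logarithm, not $n^{\alpha/2-\varepsilon}$. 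The Edgeworth bound you quote improves only the \emph{tail} of $Y^{(m)}/\sqrt m$ at large arguments; it does not reduce $\E_\alpha|Y^{(m)}|^\alpha$ below $Cm^{\alpha/2}$, so it cannot help the square--function term. (For $\alpha<2$ the situation is worse: Rosenthal in the stated form is unavailable, and the von Bahr--Esseen replacement $\sum_m\E|c_m|^\alpha\le CL^{\alpha/2+1}$ exceeds $L^\alpha$.)

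The paper sidesteps this obstruction by never computing $\E|W|^\alpha$. It works with the tail probability directly and introduces a threshold at $n^{(1-\sigma)/2}$ with a small $\sigma>0$: on $\{|\Y_m|\le n^{(1-\sigma)/2}\}$ one has $|M''_t|\le n^{(1-\sigma)/2}\sum_m\Pi_{n+m}|b_{2,n+m+1}|$, and the Kesten--Goldie tail of the perpetuity gives $Ct^{-\alpha}n^{\alpha(1-\sigma)/2}$; on $\{|\Y_m|>n^{(1-\sigma)/2}\}$ a dyadic decomposition in $|\Y_m|$, the change of measure, and the Edgeworth estimate $\P_\alpha(|\Y_m|>e^{k-1}n^{(1-\sigma)/2})\le Cm^{(r-1)/2}e^{-rk}n^{-r(1-\sigma)/2}$ yield a strictly smaller contribution. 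The gain $\beta<\alpha/2$ comes from the $\sigma$--gap in the threshold, and the condition $r>2\alpha+1$ is exactly what is needed to choose $\sigma\in(0,\frac{r-2\alpha-1}{2r})$ so that both pieces land below $n^{\alpha/2}$. Your outline has the right ingredients (change of measure, Edgeworth), but they must be applied after a truncation of $\Y_m$, not inside a global moment bound on $W$.
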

\begin{proof}
Let  $\Y _m=\sum _{j=n_0-L}^{n_0-L+m-1}y_j$, $1\leq m\leq 2L$. To simplify the notation we will write $n=n_0-L-1$. Let $0<\s <\frac{r-2\a -1}{2r}$. Recall that in this notation
$$
M''_t=\sum _{m=1}^{2L} \Y _m\Pi _{n+m}b_{2,n+m+1}.$$
We have
\begin{align*}
\P ( |M''_t|>t )&\leq \P \Big ( \sum _{m=1}^{2L} \big (|\Y _m |\Ind {|\Y _m |>n^{\frac{1-\s }{2}}}+n^{\frac{1-\s }{2}}\big )\Pi _{n+m}|b_{2,n+m+1}|>t\Big )\\
&\leq \P \Big ( \sum _{m=1}^{2L} \Pi _{n+m}|b_{n+m+1}| >\frac{t}{2}n^{-\frac{1-\s }{2}}\Big )\\
&+ \P \Big ( \sum _{m=1}^{2L} \big (|\Y _m |\Ind {|\Y _m |>n^{\frac{1-\s }{2}}}\big )\Pi _{n+m}|b_{n+m+1}|>\frac{t}{2}\Big )\\
&\leq Ct^{-\a }n^{\frac{\a (1-\s )}{2}} +
\sum _{m=1}^{2L}\P \Big ( |\Y _m |\Ind {|\Y _m |>n^{\frac{1-\s }{2}}} \Pi _{n+m} |b_{n+m+1}|>\frac{ct}{2m^{1+\s}}\Big ),
\end{align*}
where $c^{-1}=\sum _{m=1}^{\8}\frac{1}{m^{1+\s}}$. Fix $m$ and for $k\geq 1$ define 
\begin{equation*}
W_{k}=\{ e^{k-1} n^{\frac{1-\s}{2}}< |\Y _m|\leq e^k n^{\frac{1-\s}{2}}\}, \ k\geq 1,\\
\end{equation*}
and
$$U_{t,k,s}=\{ \Pi _{n+m}>\frac{c}{2}ts^{-1}e^{-k}n^{-\frac{(1-\s)}{2}}m^{-(1+\s)}\}.$$
Let $\nu $ be the law of $|b_2|$.
Then 
\begin{align*}
&\P \Big ( |\Y _m |\Ind {\{|\Y _m |>n^{\frac{1-\s }{2}}\}} \Pi _{n+m} |b_{n+m+1}|>\frac{ct}{2m^{1+\s}}\Big )\\&=
\int _{\R }\P \Big ( |\Y _m |\Ind {\{|\Y _m |>n^{\frac{1-\s }{2}}\}} \Pi _{n+m} >\frac{ct}{2sm^{1+\s}}\Big )\Ind {s\neq 0}\ d\nu (s)\\
&\leq \sum _{k=1}^{\8 }\int _{\R }\P \Big ( \big \{ |\Y _m |\Ind {\{|\Y _m |>n^{\frac{1-\s }{2}}\}} \Pi _{n+m}>\frac{ct}{2sm^{1+\s}}\big \}\cap W_k\Big )\Ind {s\neq 0}\ d\nu (s)\\
&\leq \sum _{k=1}^{\8 }\int _{\R}\E \Ind _{W_k}\Ind _{U_{t,k,s}}\ d\nu (s)=
 \sum _{k=1}^{\8 }\int _{\R } \E _{\a }\Ind _{W_k}\Ind _{U_{t,k,p}}\Pi _{n+m}^{-\a }\big )\ d\nu (s)\\
&\leq Ct^{-\a }m^{\a (1+\s)} n^{\frac{(1-\s)\a}{2}}\sum _{k=1}^{\8 }e^{k\a}\P _{\a } \big (|\Y _m|>e^{k-1}n^{\frac{(1-\s)}{2}}\big )\int _{\R }s^{\a }\ d\nu (s).
\end{align*} 
Observe that $m^{-\jd} n^{\frac{(1-\s)}{2} }\geq L^{-\jd }n^{\frac{1-\s}{2}}\to \8 $ when $n\to \8 $. Hence, as before,  by Theorem \ref{petrov1} there is $C_1$ 
$$
\P _{\a } \big (m^{-\jd}|\Y _m|>e^{k-1}n^{\frac{(1-\s)}{2}}m^{-\jd}\big )\leq C_1 m^{\frac{r}{2}-\jd} e^{-r(k-1)}n^{-\frac{r(1-\s)}{2} }
$$
%We have used that $m^{-\jd} n^{\frac{(1-\s)}{2} }\geq n^{\frac{1}{4}-\s}$.
for sufficiently large $n$ and all $m$. Summing over $k$, we obtain 
$$
\P \Big ( |\Y _m |\Ind _{\{|\Y _m |>n^{\frac{1-\s }{2}}\}}\Pi _{n+m} |b_{n+m+1}|>\frac{ct}{2m^{1+\s}}\Big )\leq C_2
t^{-\a }n^{\frac{(1-\s)\a}{2}} m^{\frac{r}{2}-\jd +(1+\s )\a} n^{-\frac{r(1-\s)}{2} }\E |b_2|^{\a }
$$
Finally taking the sum over $m$ we get 
\begin{align*}
\P \Big ( \sum _{m=1}^{2L} \big (|\Y _m |\Ind _{\{|\Y _m |>n^{\frac{1-\s }{2}}\}}\big )\Pi _{n+m}|b_{n+m+1}|>\frac{t}{2}\Big )
&\leq C_3t^{-\a }L^{\frac{r}{2}+\jd +(1+\s )\a}n^{\frac{(1-\s)\a}{2}-\frac{r(1-\s)}{2} }\\
& \leq C_3t^{-\a }n^{\b }(\log n)^{\gamma },
\end{align*}
where
$$
\b =\frac{-r+1+2r\s +4\a}{4}<\frac{\a }{2}, \quad \gamma =\frac{r+1}{4}+ \frac{(1+\s)\a}{2} $$
because $L\leq D\sqrt{n\log n}$ and $\s <\frac{r-2\a -1}{2r}$. Notice that if $r\geq 3$ but not necessarily $r>2\a +1$ and $\s $ is small enough then $\b <\a$.
\end{proof}

%$$
%s=\E ya^{\a }$$

\section{The non centered case}
Now we assume that
$\E ya^{\alpha }=s\neq 0$ and we study asymptotics of the main term $M_t$ in $\Xa _0$.
Let
\begin{equation*}
R_t=\sum _{m=n_0-L}^{n_0+L}\Pi _{m-1}(m-1)b_{2,m}.
\end{equation*}
Then
\begin{equation*}
M_t-sR_t=\sum _{m=n_0-L}^{n_0+L}\Pi _{m-1}\big (\sum _{j=1}^{m-1}(y_j-s)\big )b_{2,m}
\end{equation*}
and in view of Lemma \ref{mainzero} and the second statement of Lemma \ref{offzero} we have 
\begin{lemma}
Suppose that assumptions \eqref{ass2}-\eqref{ass4}, \eqref{ass6}, \eqref{ass5} are satisfied
and let $c_+,c_-$ be as in \eqref{righttail} and \eqref{lefttail}.
Assume further that there  is $r\geq 3$, $r>\a $ such that $\E |y|^ra^{\a }<\8 $. Then  
$$
\lim _{t \to \8}\P ( |M_t-sR_t|>t )t^{\a }(\log t)^{-\a  }=0.
$$
\end{lemma}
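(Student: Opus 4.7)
The plan is to recognize that $M_t-sR_t$ has exactly the same structure as $M_t$ itself, but with the sequence $y_j$ replaced by the shifted sequence $\widetilde y_j:=y_j-s$. Indeed, plugging $\widetilde y_j$ into the definition of $M_t$ gives precisely
\[
\sum_{m=n_0-L}^{n_0+L}\Pi_{m-1}\Big(\sum_{j=1}^{m-1}(y_j-s)\Big)b_{2,m}=M_t-sR_t.
\]
The decisive point is that $\E\widetilde y\,a^{\a}=\E y a^{\a}-s\E a^{\a}=s-s=0$, so the centered-case analysis of Section 4 is directly applicable.

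Before invoking those lemmas I would verify the required moment hypothesis for $\widetilde y$: the inequality $|\widetilde y|^{r}\le 2^{r-1}(|y|^{r}+|s|^{r})$ together with $\E a^{\a}=1$ and the assumption $\E|y|^{r}a^{\a}<\infty$ yield $\E|\widetilde y|^{r}a^{\a}<\infty$. The moment condition $\E_\a \widetilde y^{\,2}<\infty$ needed for the CLT step inside Lemma \ref{mainzero} follows from $r\ge 3$ by H\"older. The other standing assumptions \eqref{ass2}--\eqref{ass4}, \eqref{ass6}, \eqref{ass5} do not involve $y$ directly and so are unchanged.

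Next I would split $M_t-sR_t=\widetilde M_t'+\widetilde M_t''$ exactly as $M_t=M_t'+M_t''$ is split at the beginning of Section 4, using the telescoping trick separating the contribution of $\sum_{j=1}^{n_0-L-1}\widetilde y_j$ from that of the tail $\sum_{j=n_0-L}^{n_0-L+m-1}\widetilde y_j$. To the main piece $\widetilde M_t'$ I would apply Lemma \ref{mainzero} (with $\widetilde y$ in place of $y$), obtaining $\P(\widetilde M_t'>t)\sim(c_++c_-)\rho^{\a/2}c_0(\widetilde K)\,t^{-\a}(\log t)^{\a/2}$, which is $o(t^{-\a}(\log t)^{\a})$. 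To the error piece $\widetilde M_t''$ I would apply the second statement of Lemma \ref{offzero}, valid under the weaker hypothesis $r>\a$, giving $\P(|\widetilde M_t''|>t)\le C n^{\b}t^{-\a}$ with $\b<\a$, hence also $o(t^{-\a}(\log t)^{\a})$ since $n\asymp\log t$. The conclusion then follows by summing the two contributions (and treating the left tail identically).

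There is really no serious obstacle here; the lemma is essentially a corollary of the centered-case results of Section 4 applied to a cosmetically modified sequence. The one point one has to be mildly careful about is to make sure that the weaker moment condition $r>\a$ (rather than $r>2\a+1$) suffices, which is why the statement of Lemma \ref{offzero} was formulated to also cover that regime, producing the bound with $\b<\a$ rather than $\b<\a/2$. This weaker bound is exactly strong enough for the $(\log t)^{-\a}$ normalization used here.
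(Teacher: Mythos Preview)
Your proposal is correct and follows exactly the paper's approach: recognize $M_t-sR_t$ as the centered version of $M_t$ with $\widetilde y_j=y_j-s$, then invoke Lemma~\ref{mainzero} and the second statement of Lemma~\ref{offzero}. One small slip: assumption \eqref{ass5} \emph{does} involve $y$ through $\E(|y|^{\a+\eps_0}a^{\a+\eps_0})<\infty$, but the corresponding condition for $\widetilde y$ follows immediately from $|\widetilde y|^{\a+\eps_0}\le 2^{\a+\eps_0}(|y|^{\a+\eps_0}+|s|^{\a+\eps_0})$ and $\E a^{\a+\eps_0}<\infty$.
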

Therefore, it remains to establish asymptotics of $R_t$.
\begin{lemma}\label{Ras}
Suppose that assumptions \eqref{ass1}-\eqref{ass4}, \eqref{ass5} are satisfied
and let $c_+,c_-$ be as in \eqref{righttail} and \eqref{lefttail}. Then  
\begin{equation}\label{rightR}
\lim _{t \to \8}\P ( R_t>t )t^{\a }(\log t)^{-\a  }=c_+\rho ^{\a }
\end{equation}
\begin{equation}\label{leftR}
\lim _{t \to \8}\P ( R_t<-t )t^{\a }(\log t)^{-\a  }=c_-\rho ^{\a }
\end{equation}
\end{lemma}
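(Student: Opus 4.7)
The strategy is to exploit that across the summation range $m\in[n_0-L,n_0+L]$ the weight $(m-1)$ is nearly constant and equal to $n_0=\rho^{-1}\log t$, with relative fluctuation $L/n_0=O(\sqrt{\log\log t/\log t})\to 0$ by the choice \eqref{L}. So $R_t$ should behave like $n_0$ times a truncated perpetuity, and \eqref{rightR}--\eqref{leftR} should reduce to the Kesten--Goldie tails \eqref{righttail}--\eqref{lefttail} of $\Xa_2$ evaluated at the threshold $t/n_0$.

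Concretely, I would decompose $R_t=n_0\,\widetilde S+E_t$ with
$$\widetilde S=\sum_{m=n_0-L}^{n_0+L}\Pi_{m-1}b_{2,m},\qquad E_t=\sum_{m=n_0-L}^{n_0+L}(m-1-n_0)\Pi_{m-1}b_{2,m},$$
and first show that $E_t$ is negligible at the scale $t^{-\alpha}(\log t)^\alpha$. Since $|m-1-n_0|\le L+1$ in the summation range, $|E_t|\le (L+1)\sum_{m\ge 1}\Pi_{m-1}|b_{2,m}|$; by Kesten--Goldie (Theorem~\ref{kestengoldie}) applied to the absolute-value perpetuity, $\P(\sum_m\Pi_{m-1}|b_{2,m}|>s)\le Cs^{-\alpha}$, so
$$\P\bigl(|E_t|>\varepsilon t\bigr)\le C\varepsilon^{-\alpha}L^\alpha t^{-\alpha}.$$
From \eqref{L}, $L^\alpha=D^\alpha(\log t\,\log\log t)^{\alpha/2}=o\bigl((\log t)^\alpha\bigr)$, whence this bound is $o\bigl(t^{-\alpha}(\log t)^\alpha\bigr)$.

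Next I would replace $\widetilde S$ by $\Xa_2$ using Lemma~\ref{irrelev}. The difference $\Xa_2-\widetilde S$ equals the head $\sum_{m<n_0-L}\Pi_{m-1}b_{2,m}$ plus the tail $\sum_{m>n_0+L}\Pi_{m-1}b_{2,m}$, and $\varepsilon t/n_0=\varepsilon\rho\,t(\log t)^{-1}$ is a constant multiple of the admissible threshold in Lemma~\ref{irrelev} with $\beta=1$. Applying that lemma with $j=2$ and $\xi>0$ arbitrary (choosing $D=D(\xi)$ accordingly), $\P\bigl(|\Xa_2-\widetilde S|>\varepsilon t/n_0\bigr)=O\bigl(t^{-\alpha}(\log t)^{-\xi}\bigr)=o\bigl(t^{-\alpha}(\log t)^\alpha\bigr)$. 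Combining with the previous step, for every $\varepsilon>0$,
$$\P\bigl(n_0\Xa_2>(1+\varepsilon)t\bigr)-o\bigl(t^{-\alpha}(\log t)^\alpha\bigr)\le \P(R_t>t)\le \P\bigl(n_0\Xa_2>(1-\varepsilon)t\bigr)+o\bigl(t^{-\alpha}(\log t)^\alpha\bigr).$$
The tail \eqref{righttail} yields $\P\bigl(\Xa_2>(1\pm\varepsilon)t/n_0\bigr)\sim c_+(1\pm\varepsilon)^{-\alpha}n_0^\alpha t^{-\alpha}$; substituting $n_0=\rho^{-1}\log t$ and letting $\varepsilon\to 0$ gives the claimed limit \eqref{rightR}. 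The left-tail asymptotic \eqref{leftR} follows by the identical argument using \eqref{lefttail}.

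The main technical point is getting the right order in the bound on $E_t$. A naive estimate bounding $(m-1-n_0)$ by $m-1\sim n_0$ would replace $L^\alpha$ by $n_0^\alpha\asymp(\log t)^\alpha$ and thereby merge the error term into the leading order, which would be fatal. The crucial input is that the summation window is centred on $n_0$, so the centred weights $(m-1-n_0)$ are only $O(L)$, and the precise choice $L=D\sqrt{\log t\,\log\log t}$ in \eqref{L} is exactly what guarantees $L^\alpha=o\bigl((\log t)^\alpha\bigr)$.
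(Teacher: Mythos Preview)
Your proof is correct and follows essentially the same approach as the paper's. The paper decomposes $R_t=R'_t+R''_t$ with $R'_t=n\Pi_n S_{2L}$ (centering the weight at $n=n_0-L-1$ rather than at $n_0$) and $R''_t=\sum_{m=1}^{2L} m\,\Pi_{n+m}b_{2,n+m+1}$; it then compares $\Pi_n S_{2L}$ with $\Xa_2$ via Lemma~\ref{irrelev} and bounds $|R''_t|\le 2L\,\Pi_n\tilde S_{2L}$, again reducing to a Kesten--Goldie tail. Your centering at $n_0$ and your direct bound $|E_t|\le (L+1)\sum_m\Pi_{m-1}|b_{2,m}|$ are minor variations that in fact streamline the error estimate slightly.
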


\begin{proof} In this proof, let $n=n_0-L-1$. Then $\lim _{n\to \8 }\frac{n}{\log t}=\rho $. Let
\begin{align*}
R'_t=&n\Pi _nS_{2L}\\
R''_t=& \sum _{m=1}^{2L}m\Pi _{n+m}b_{2,n+m+1}
\end{align*}
Then
$$
R_t=R'_t+R''_t.%(n_1-1)sa_1\cdots  a_{n_1-1}S_{2L}
$$
We are going to prove
\begin{align}
\lim _{t\to \8} &\P (R'_t>t)t^{\a }(\log t)^{-\a}= c_+ \rho ^{\a }\label{rprim}\\
\lim _{t\to \8} &\P (R''_t>t)=o\big (t^{-\a }(\log t)^{\a}\big ) \label{rbis}
\end{align}
\eqref{rprim} is equivalent to 
$$
\lim _{t\to \8} \P (\Pi _n S_{2L}>tn^{-1})t^{\a }(\log t)^{-\a}= c_+\rho ^{\a } 
$$
But by \eqref{righttail}
$$
\lim _{t\to \8} \P (\Xa _2>tn^{-1})t^{\a }(\log t)^{-\a}= c_+ \rho ^{\a }
$$
and
$$
\Xa _2=\sum _{m=1}^{n_0-L-1}\Pi _{m-1}b_{2,m}+\Pi _n S_{2L} +
\sum _{m=n_0+L+1}^{\8} \Pi _{m-1}b_{2,m}.
$$
So \eqref{rprim} follows from Lemma \ref{irrelev}.
For \eqref{rbis} let
$$
\tilde S_{2L}=\sum_{m=1}^{2L}a_{n+1}\cdots  a_{n+m}|b_{2,n+m+1}|.
$$
Then
$$
R''_t\leq 2L\Pi _n\tilde S_{2L}.
$$
and so \eqref{rbis} is equivalent to
$$
\lim _{t\to \8 }\P (\Pi _{n-1}\tilde S_{2L}>t(2L)^{-\a})t^{\a }(\log t)^{-\a}=0.$$
Let
\begin{equation*}
\tilde \Xa _2=\sum _{m=1}^{\8}\Pi _{m-1}|b_{2,m}|.
\end{equation*}
By \eqref{L} and Theorem \ref{kestengoldie},
$$
\lim _{t\to \8} \P (\tilde \Xa _2>t(2L)^{-1})t^{\a }(\log t)^{-\a}=0.
$$
But 
\begin{equation*}
\tilde \Xa _2=\sum _{m=1}^{n_0-L-1}\Pi _{m-1}|b_{2,m}|+\Pi _n\tilde S_{2L} +
\sum _{m=n_0+L+1}^{\8} \Pi _{m-1}|b_{2,m}|.
\end{equation*}
Hence \eqref{rbis} follows from Lemma \eqref{irrelev}.
%\begin{equation}
%\lim _{t\to \8 }\P (R''_t >t)t^{\a }(\log t)^{-\a}=\lim _{t\to \8 }\P (\Pi %_{n-1}\tilde S_{2L}>t(2L)^{-\a})t^{\a }(\log t)^{-\a}=0.
%\end{equation}
\end{proof}

Finally, we obtain
\begin{cor}\label{nonzero}
Let $c_+,c_-$ be as in \eqref{righttail} and \eqref{lefttail}.
Suppose that the assumptions \eqref{ass2}-\eqref{ass4}, \eqref{ass6}, \eqref{ass5} are satisfied and there is $r\geq 3$, $r>\a $ such that $\E |y|^ra^{\a }<\8 $. Then 
$$
\lim _{t \to \8}\P (M_t>t)t^{\a }(\log t)^{-\a }=\begin{cases} c_+s^{\a }\rho ^{\a }\ \mbox{if}\ \ \ s>0\\ c_-|s|^{\a }\rho ^{\a } \ \mbox{if}\ \ \ s<0\end{cases}
$$
and
$$
\lim _{t \to \8}\P (M_t<-t)t^{\a }(\log t)^{-\a }=\begin{cases} c_-s^{\a }\rho ^{\a }\ \mbox{if}\ \ \ s>0\\ c_+|s|^{\a }\rho ^{\a } \ \mbox{if}\ \ \ s<0\end{cases}
$$
\end{cor}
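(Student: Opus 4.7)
The plan is to combine the preceding approximation lemma (which says that $M_t - sR_t$ is negligible in the $t^{-\a}(\log t)^{\a}$ scale) with the two-sided tail asymptotics for $R_t$ from Lemma \ref{Ras}, using a standard sandwich argument.

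First, for any fixed $\eps \in (0,1)$ I use the deterministic bounds
\begin{align*}
\P(M_t > t) &\le \P\bigl(sR_t > (1-\eps)t\bigr) + \P\bigl(|M_t - sR_t| > \eps t\bigr),\\
\P(M_t > t) &\ge \P\bigl(sR_t > (1+\eps)t\bigr) - \P\bigl(|M_t - sR_t| > \eps t\bigr),
\end{align*}
and similarly for $\P(M_t < -t)$. By the lemma preceding Lemma \ref{Ras}, the error term satisfies $\P(|M_t - sR_t| > \eps t) = o\bigl(t^{-\a}(\log t)^{\a}\bigr)$ for every fixed $\eps>0$, so only the first summand matters after normalization by $t^{\a}(\log t)^{-\a}$.

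Next I need a rescaled version of Lemma \ref{Ras}, namely that for any constant $c>0$,
\begin{equation*}
\lim_{t\to\8}\P(R_t > ct)\,t^{\a}(\log t)^{-\a}=c_+\rho^{\a}c^{-\a},
\qquad
\lim_{t\to\8}\P(R_t < -ct)\,t^{\a}(\log t)^{-\a}=c_-\rho^{\a}c^{-\a}.
\end{equation*}
This is obtained by repeating the proof of Lemma \ref{Ras} verbatim with $t$ replaced by $ct$: the main step \eqref{rprim} rewrites $\P(R'_t>ct)=\P(\Pi_n S_{2L}>ctn^{-1})$, which is compared via Lemma \ref{irrelev} to $\P(\Xa_2 > ctn^{-1})$, and Theorem \ref{kestengoldie} gives the factor $c^{-\a}$ directly. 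The negligible-part estimate \eqref{rbis} is insensitive to replacing $t$ by $ct$.

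With this rescaled tail in hand, the corollary is immediate. If $s>0$ then $\P(sR_t>(1\pm\eps)t)=\P(R_t>(1\pm\eps)t/s)$, and the sandwich gives
\begin{equation*}
c_+\rho^{\a}s^{\a}(1+\eps)^{-\a}\le\liminf_{t\to\8}\P(M_t>t)t^{\a}(\log t)^{-\a}\le\limsup_{t\to\8}\P(M_t>t)t^{\a}(\log t)^{-\a}\le c_+\rho^{\a}s^{\a}(1-\eps)^{-\a};
\end{equation*}
sending $\eps\to 0$ yields the claim. If $s<0$ then $\P(sR_t>(1\pm\eps)t)=\P(R_t<-(1\pm\eps)t/|s|)$, and the same argument with $c_+$ replaced by $c_-$ and $s^{\a}$ by $|s|^{\a}$ gives the corresponding limit. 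The lower-tail statement $\P(M_t<-t)$ is handled identically by applying the sandwich to $-M_t$.

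I expect no real obstacle here: the only point requiring a small amount of care is verifying the rescaled version of Lemma \ref{Ras}, and this is a direct repetition of its proof rather than a new argument. The content of the corollary is essentially that $M_t$ inherits the tail of $sR_t$ because the discrepancy $M_t - sR_t$ has a genuinely smaller tail, a fact already secured in the previous lemma.
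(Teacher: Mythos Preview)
Your proposal is correct and matches the paper's approach: the paper states the corollary without proof, treating it as an immediate consequence of the preceding lemma (negligibility of $M_t-sR_t$) and Lemma~\ref{Ras}, and your sandwich argument is precisely the standard way to fill this in. One small remark: the preceding lemma is stated only at threshold $t$, not $\eps t$, so strictly speaking the error bound $\P(|M_t-sR_t|>\eps t)=o(t^{-\a}(\log t)^{\a})$ also requires the rescaling trick you already describe for Lemma~\ref{Ras}; since the underlying estimates from Lemmas~\ref{mainzero} and~\ref{offzero} give $O(t^{-\a}(\log t)^{\a/2})$ and $O(t^{-\a}n^{\b})$ with $\b<\a$, replacing $t$ by $\eps t$ only introduces a harmless factor $\eps^{-\a}$.
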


\section{Appendix}
For the reader convenience we recall three theorems that are used in the proofs of Theorems \ref{mthm1} and \ref{mthm}. Define a Markov process  $\{ W_n\}$  on $\R$ by the formula
$$
W_n = M_nW_{n-1}+Q_n,\ \ n\geq 1,
$$
where $(M_n,Q_n)\in \R^+ \times \R $ is a sequence of i.i.d. 
%(independent identically distributed) 
random variables and $W_0\in \R$ is an initial distribution.
If $\E \log M < 0$ and $\E \log^+|Q|<\8$, the sequence $\{W_n\}$ converges in law to a random
variable $W$, which is the unique solution to the random
difference equation
$$
W =_d MW+Q, \qquad \mbox{$W$ independent
of }(A,B);
$$
see \cite{V}.  The following result of Kesten \cite{K} and Goldie \cite{G} describes the tail of $W$.
 \begin{thm}
 \label{kestengoldie}
 Assume that the law of $\log M$ is non-arithmetic, $\E \log M <0$, $\E M^\a = 1$ for some $\a>0$ and $\E[|Q|^\a + M^\a \log^+M]<\8$. Then
$$
\lim_{t\to\8}t^\a \P[W>t] = C_+ \qquad \mbox{and} \qquad \lim_{t\to\8} t^\a\P[W<-t] = C_-.
$$
Moreover,
   $ C_++C_->0$ if and only if
$$
\P[Mx+Q=x]<1 \mbox{ for every $x\in \R$} .
$$
 \end{thm}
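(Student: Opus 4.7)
The plan is to follow Goldie's implicit renewal argument. First, introduce the Cram\'er change of measure $\tilde\P$ on $\sigma(M)$ defined by $d\tilde\P|_{\sigma(M)}=M^{\a}\,d\P$. By the hypothesis $\E M^{\a}=1$ this is a probability measure, and since $\L(\b):=\log \E M^{\b}$ is convex with $\L(0)=\L(\a)=0$ and $\L'(0)=\E\log M<0$, one has $\rho:=\tilde\E\log M=\E[M^{\a}\log M]\in(0,\8)$, finiteness coming from $\E[M^{\a}\log^+M]<\8$ and from the fact that $M^{\a}\log^- M\leq\log^- M$ is integrable since $\E\log M$ is a finite negative number. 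Under $\tilde\P$, $\log M$ is non-arithmetic with positive finite mean $\rho$, so the associated random walk is a natural candidate for the key renewal theorem.

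Second, set $t=e^u$ and $r(u):=e^{\a u}\P[W>e^u]$. Conditioning on $M$ and using $W$ independent of $(M,Q)$ gives
$$\P[MW>e^u]=\int\P[W>e^u/m]\,d\P_M(m)=e^{-\a u}\,\tilde\E\bigl[r(u-\log M)\bigr],$$
the second equality using $d\tilde\P|_{\sigma(M)}=M^{\a}d\P$. Setting $g(u):=e^{\a u}\bigl(\P[MW+Q>e^u]-\P[MW>e^u]\bigr)$ and observing that $\P[W>e^u]=\P[MW+Q>e^u]$ by stationarity, one arrives at the renewal equation
$$r(u)=\tilde\E\bigl[r(u-\log M)\bigr]+g(u),\qquad u\in\R.$$

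Third, verify that $g$ is directly Riemann integrable on $\R$. Fubini yields
$$\int_{\R}|g(u)|\,du\leq \tfrac{1}{\a}\,\E\bigl|(MW+Q)_+^{\a}-(MW)_+^{\a}\bigr|,$$
with $x_+=\max(x,0)$. The inequality $|(x+q)_+^{\a}-x_+^{\a}|\leq C(|q|^{\a}+|q|\,|x|^{\a-1})$ for $\a>1$, combined with $\E|Q|^{\a}<\8$ and an a priori bound $\E|W|^{\a-\d}<\8$ for a small $\d>0$ (obtained by truncation using $\E M^{\a-\d}<1$), makes the right-hand side finite; for $\a\leq 1$ one uses Goldie's smoothing device that replaces $x_+^{\a}$ by its convolution with a suitable kernel in order to bypass the unbounded derivative. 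Monotonicity of $t\mapsto\P[W>t]$ upgrades $L^1$-integrability to direct Riemann integrability. The key renewal theorem for the walk $S_n:=\sum_{j=1}^n\log M_j$ under $\tilde\P$ then produces
$$C_+:=\lim_{u\to\8}r(u)=\rho^{-1}\int_{\R}g(s)\,ds.$$

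Finally, an entirely parallel renewal equation for $u\mapsto e^{\a u}\P[W<-e^u]$ produces $C_-$. For the dichotomy, $C_++C_-=0$ forces $r\equiv 0$ on $(u_0,\8)$, and iterating the renewal equation backwards pins down $g\equiv 0$ a.e., so that the laws of $MW$ and $MW+Q$ coincide; the independence of $W$ from $(M,Q)$ and a.s.\ contraction of the products $M_1\cdots M_n$ then localize $W$ at a single point $x$ with $\P[Mx+Q=x]=1$. Conversely, existence of such an $x$ gives $W\equiv x$ a.s. and hence $C_\pm=0$. The technically subtle step I anticipate is the direct Riemann integrability of $g$ together with the auxiliary bound $\E|W|^{\a-\d}<\8$; Goldie's convolution smoothing and the attendant bookkeeping form the genuinely delicate part of the argument, while the remaining steps are standard once the renewal equation is in hand.
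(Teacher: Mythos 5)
The paper does not actually prove this statement: it is recalled in the Appendix as background, with the proof delegated to Kesten and Goldie. So the comparison is between your reconstruction and Goldie's implicit renewal theory, which is indeed the canonical route. Your first three steps are a faithful and essentially correct outline of that argument: the Cram\'er tilt $d\tilde\P=M^{\a}d\P$, the renewal equation for $r(u)=e^{\a u}\P[W>e^u]$, the $L^1$ bound $\int|g|\le \a^{-1}\E|(MW+Q)_+^{\a}-(MW)_+^{\a}|$, and the passage through a smoothed kernel plus monotonicity of $t\mapsto\P[W>t]$ to invoke the key renewal theorem. Two small inaccuracies there: $\rho<\8$ does not need $\E\log^-M<\8$ (the function $x\mapsto x^{\a}\log^-x$ is bounded on $(0,1)$, so $\E[M^{\a}\log^-M]$ is automatically finite); and for $\a\le 1$ the elementary inequality $|x_+^{\a}-y_+^{\a}|\le|x-y|^{\a}$ already gives the $L^1$ bound --- Goldie's smoothing is needed for the renewal-theoretic step (direct Riemann integrability), not for integrability of $g$, and it is needed for all $\a$, not only $\a\le1$.

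The genuine gap is in the dichotomy. From $C_+=C_-=0$ you conclude that ``$r\equiv 0$ on $(u_0,\8)$''; this is false --- vanishing of the limits only gives $r(u)\to 0$ as $u\to\8$, and $r$ is in general strictly positive for all finite $u$ whenever $W$ is nondegenerate with unbounded support. Consequently the step ``iterating the renewal equation backwards pins down $g\equiv 0$ a.e.'' does not go through; what the renewal representation actually yields is $C_++C_-=(\a\rho)^{-1}\E\bigl[|MW+Q|^{\a}-|MW|^{\a}\bigr]$, and the vanishing of this single expectation does not by itself force the laws of $MW+Q$ and $MW$ to coincide. The standard proof of strict positivity (Grincevi\v{c}ius, Goldie, or Lemma 3.2 of \cite{BD} cited in this paper) is a separate argument: one first shows that $\P[Mx+Q=x]<1$ for all $x$ forces the support of $W$ to be unbounded, and then derives a lower bound $\liminf_{t\to\8}t^{\a}\P[|W|>t]>0$ from the backward representation $W\overset{d}{=}\Pi_nW'+R_n$ (with $W'$ an independent copy) combined with a lower estimate of order $t^{-\a}n^{-1/2}$ for $\P[\Pi_n>t]$ at the critical scale $n\asymp\rho^{-1}\log t$. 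You need to supply this argument; the converse implication (a fixed point $x$ with $\P[Mx+Q=x]=1$ gives $W\equiv x$ and $C_\pm=0$) is fine as you state it.
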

To estimate the error in the central limit theorem we use the following theorem, \cite{Petrovbook}
 \begin{thm}
 \label{petrov1}
Let $Y_1,\dots Y_n$ be independent identically distributed random variables, $\E Y_1=0$, $\E Y_1^2=\s ^2>0$, $\E |Y_1|^r <\infty $ for some $r\geq 3$. Let
$$
F_n(x)=\P \big ( \s ^{-1}n^{-1\slash 2}\sum _{j=1}^nY_j<x\big ).
$$
and
$$
\Phi (x) =(2\pi )^{-1\slash 2}\int _{-\infty }^xe^{-t^2\slash 2}\ dt
$$
Then 
$$
|F_n (x)-\Phi (x)|\leq C(r)(1+|x|)^{-r}(\s ^{-3}\E |Y_1|^3n^{-1\slash 2} +\s ^{-r}\E |Y_1|^rn^{-(r-2)\slash 2})
$$
for all $x$, where $C(r)$ is a positive constant depending only on $r$.
\end{thm}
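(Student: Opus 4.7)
\medskip
\noindent\textbf{Proof proposal for Theorem \ref{petrov1}.}

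First I would normalise so that $\s = 1$ (the general case follows by replacing $Y_j$ with $Y_j/\s$), and split the analysis according to the size of $x$. Fix a large constant $A$ to be chosen later. In the region $|x| \leq A$ the weight $(1+|x|)^{-r}$ is bounded below by a positive constant, so what I need is a uniform bound of the form $|F_n(x) - \Phi(x)| \le C(\E |Y_1|^3 n^{-1/2} + \E|Y_1|^r n^{-(r-2)/2})$. This is the classical Berry--Esseen bound, with the $n^{-1/2}$ rate coming from the standard smoothing inequality of Esseen applied to the characteristic function $f(t)^n$ of $S_n/\sqrt{n}$, using the Taylor expansion $\log f(t) = -t^2/2 + O(|t|^3)$ near the origin. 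The stronger rate $n^{-(r-2)/2}$ does not improve on $n^{-1/2}$ here (since $r \ge 3$), so the second term is simply carried along.

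In the region $|x| > A$ I would estimate $1 - F_n(x)$ and $1 - \Phi(x)$ separately (with the symmetric treatment for $x < -A$) and show that each is at most $C(1+|x|)^{-r}(\E|Y_1|^3 n^{-1/2} + \E|Y_1|^r n^{-(r-2)/2})$. The Gaussian tail satisfies $1 - \Phi(x) \le e^{-x^2/2}/(x\sqrt{2\pi})$, which is much smaller than any polynomial $|x|^{-r}$ and so is trivially absorbed. For the tail $\P(S_n/\sqrt n > x)$ I would use a Nagaev/Fuk--Nagaev style truncation: write $Y_j = Y_j' + Y_j''$ with $Y_j' = Y_j \mathbf{1}_{|Y_j| \le c x \sqrt n}$ for a small constant $c$, and estimate the contribution of the un-truncated part by a union bound
$$n \, \P\bigl(|Y_1| > c x \sqrt n\bigr) \le C (x \sqrt n)^{-r} n \, \E|Y_1|^r = C |x|^{-r} n^{-(r-2)/2}\E|Y_1|^r,$$
which is exactly the second term in the target bound. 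The truncated part is concentrated around a mean of order $n \E|Y_1|\mathbf{1}_{|Y_1|>c x\sqrt n} = O(|x|^{-r+1} n^{-(r-2)/2} \E|Y_1|^r)$, so it can be controlled by an exponential (Bernstein) or by an $r$-th moment Rosenthal inequality applied to the truncated sum; in either case one obtains a bound of order $|x|^{-r}$ times the same two $n$-rates.

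The final step is to choose $A$ large enough that in the region $A \le |x| \le A_1 \sqrt{\log n}$ (where the polynomial and Gaussian tails are comparable) the two regimes agree, and to verify that the constants depending on $r$ coming from the Rosenthal inequality and from the smoothing inequality can be combined into a single $C(r)$. The main obstacle is the intermediate regime: for $|x|$ of order $\sqrt{\log n}$ one can no longer use the Gaussian tail as a free pass, and the smoothing-inequality method does not see the polynomial weight directly. One resolves this by introducing an Edgeworth correction $\Phi_r(x)$ and iterating the characteristic-function estimate on dyadic intervals of $x$; once the weighted bound on $|F_n(x) - \Phi_r(x)|$ is known, the difference $|\Phi_r(x) - \Phi(x)|$ is explicitly a polynomial times a Gaussian, hence harmless. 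Finally, combining the three regimes gives the claimed bound, with the $\sigma^{-3}$ and $\sigma^{-r}$ factors restored by undoing the initial rescaling.
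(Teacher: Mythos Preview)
The paper does not prove this theorem. It appears in the Appendix under the sentence ``For the reader convenience we recall three theorems that are used in the proofs of Theorems \ref{mthm1} and \ref{mthm}'' and is quoted verbatim from Petrov's monograph \cite{Petrovbook}; no argument is given or intended. So there is no paper proof to compare your proposal against.

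As a standalone sketch, your outline is the standard route to such non-uniform Berry--Esseen bounds: smoothing/characteristic-function methods in the central region, and a Fuk--Nagaev type truncation to capture the $|x|^{-r}n^{-(r-2)/2}$ tail term. One point to tighten: your claim that the Gaussian tail is ``trivially absorbed'' only works once $|x|$ exceeds roughly $\sqrt{\log n}$, since the target bound carries a factor $n^{-1/2}$; you flag this as the ``intermediate regime'' but the fix you propose (introduce an Edgeworth correction $\Phi_r$ and iterate on dyadic blocks) is vague and is in fact where most of the real work lies. In Petrov's treatment this is handled by a careful weighted smoothing inequality rather than by iteration, and the Edgeworth terms enter through a direct expansion of the characteristic function to order $r$. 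If you want a self-contained proof, that is the step to write out in full.
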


For a positive random variable $M$ let $\Lambda (\b )=\log \E M^{\b }$. Suppose that $\Lambda $ is well defined for $0\leq \b < \b _0 \leq \8 $. Then so is $\Lambda '$. Let $\l =\supp _{\b <\b_0}\Lambda '(\b )$. 
The following uniform large deviation theorem is due to \cite{Petrov}, Theorem 2.

\begin{thm}\label{petrov2} 
Suppose  that $c$ satisfies $ {\mathbb E} \left[ \log A \right] < c < \l $, and suppose that $\delta(n)$ is an arbitrary function satisfying  $\lim_{n \to \infty} \delta(n) = 0$.
Also, assume that the law of $\log \ M$ is non-lattice.
Then with %$\alpha \equiv \alpha(a^{-1})$ i.e.
$\alpha$ chosen such that $\Lambda '(\alpha )=c$, we have that
\begin{align*} 
{\mathbb P} & \left\{ \log M_1+\dots +\log M_n > n(c + \gamma_n) \right\}\nonumber\\
& \quad\quad = \frac{1}{\alpha\sigma(\alpha) \sqrt{2\pi n}} \exp\left\{-n \Big( \alpha(c+\gamma_n) - \Lambda(\alpha) + \frac{\gamma_n^2}{2\sigma^2(\alpha)}
  \left(1 + O(|\gamma_n| \right) \Big) \right\} (1+o(1))
\end{align*}
as $n \to \infty$,
uniformly with respect to $c$ and $\gamma_n$ in the range
\begin{equation} \label{petrov-0}
{\mathbb E} \left[ \log \ M \right] + \epsilon \le c \le \l - \epsilon \quad \text{\rm and} \quad |\gamma_n| \le \delta(n),
\end{equation}
where $\epsilon >0$.
\end{thm}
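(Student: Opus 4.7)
The plan is the classical Cram\'er--Esscher exponential tilting combined with a non-lattice local central limit theorem, with all estimates kept uniform over the stated parameter range. First, for $\beta$ in the interior of the domain of $\Lambda(\beta) = \log \E M^\beta$, introduce the tilted probability measure $\tilde{\P}_\beta$ on $\sigma(\log M_1, \ldots, \log M_n)$ with density $\prod_{i=1}^n M_i^\beta e^{-n\Lambda(\beta)}$; under $\tilde{\P}_\beta$ the variables $\log M_i$ are i.i.d.\ with mean $\Lambda'(\beta)$ and variance $\sigma^2(\beta)=\Lambda''(\beta)$. To center the target level, choose $\alpha_n$ by $\Lambda'(\alpha_n)=c+\gamma_n$; strict convexity of $\Lambda$ and the implicit function theorem give $\alpha_n = \alpha + \gamma_n/\sigma^2(\alpha) + O(\gamma_n^2)$ uniformly for $c$ ranging over the compact interval $[\E\log M + \epsilon,\ \l-\epsilon]$.

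Second, with $S_n = \sum_{i\le n}\log M_i$ and $T_n = S_n - n(c+\gamma_n)$, the Radon--Nikodym identity yields
$$
\P\big(S_n > n(c+\gamma_n)\big) = e^{-n[\alpha_n(c+\gamma_n)-\Lambda(\alpha_n)]}\int_0^\infty e^{-\alpha_n t}\,\tilde F_n(dt),
$$
where $\tilde F_n$ is the law of $T_n$ under $\tilde\P_{\alpha_n}$, now a centered sum of variance $n\sigma^2(\alpha_n)$. Since $\log M$ is non-lattice, so is its tilt, and Stone's local limit theorem gives
$$
\tilde F_n\big([t,t+h)\big) = \frac{h}{\sigma(\alpha_n)\sqrt{2\pi n}}\bigl(1+o(1)\bigr)
$$
uniformly for $t$ in bounded intervals. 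Because $e^{-\alpha_n t}$ concentrates the integral on $t$ of order one, splitting at $t = A\sqrt{n}$ and controlling the tail by exponential Chebyshev yields
$$
\int_0^\infty e^{-\alpha_n t}\,\tilde F_n(dt) = \frac{1}{\alpha_n\sigma(\alpha_n)\sqrt{2\pi n}}\bigl(1+o(1)\bigr).
$$

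Third, identify the exponent via Legendre duality. With $I(x) = \sup_\beta[\beta x - \Lambda(\beta)]$, stationarity at $\alpha_n$ gives $I(c+\gamma_n) = \alpha_n(c+\gamma_n) - \Lambda(\alpha_n)$. Using the standard identities $I'(c)=\alpha$ and $I''(c)=1/\sigma^2(\alpha)$ together with a second-order Taylor expansion,
$$
I(c+\gamma_n) = \alpha(c+\gamma_n) - \Lambda(\alpha) + \frac{\gamma_n^2}{2\sigma^2(\alpha)}\bigl(1+O(|\gamma_n|)\bigr)
$$
uniformly, which is precisely the exponent in the theorem. Combining this with $\alpha_n\sigma(\alpha_n) = \alpha\sigma(\alpha)(1+o(1))$ in the prefactor completes the derivation.

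The main obstacle, and the reason the non-lattice hypothesis is essential, is securing Stone's local CLT and its remainder \emph{uniformly} in the tilting parameter $\beta$ varying through a compact set. The key Fourier-analytic inputs are a uniform lower bound $|1-\phi_\beta(\theta)|\ge \eta(\delta)>0$ for $|\theta|\ge \delta$ (with $\phi_\beta$ the characteristic function of $\log M$ under $\tilde\P_\beta$), which follows from non-latticeness together with continuity in $\beta$ and compactness, and uniform third-moment bounds on $\log M$ under $\tilde\P_\beta$, which follow from finiteness of $\E M^\beta$ in a compact neighbourhood of $\alpha$. Once this uniform local CLT is in hand, the remaining steps are routine uniform Taylor expansions of $\Lambda$, $I$ and $\alpha_n \mapsto \alpha$.
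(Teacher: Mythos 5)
This statement is not proved in the paper at all: it is quoted verbatim in the Appendix as Theorem~2 of Petrov (1965), so there is no internal proof to compare against. Your argument is the standard conjugate-distribution (Cram\'er--Esscher) derivation of a Bahadur--Rao type expansion, and it is essentially sound; it is also close in spirit to Petrov's original proof, which likewise tilts to the associated distribution but then controls the resulting Laplace integral with an Esseen-type smoothing inequality and a Berry--Esseen bound rather than Stone's local limit theorem. Two points deserve to be made explicit. First, everything hinges on the tilting parameter staying in a fixed compact subset of the interior of the domain of $\Lambda$ and bounded away from $0$: this is exactly what the hypothesis $\mathbb{E}[\log M]+\epsilon\le c\le \l-\epsilon$ delivers, since $\alpha=(\Lambda')^{-1}(c)$ then ranges over a compact interval $[\alpha_-,\alpha_+]$ with $\alpha_->0$, and $\alpha_n=\alpha+O(|\gamma_n|)$ stays there for large $n$; without this, the prefactor $1/(\alpha_n\sigma(\alpha_n))$ and the geometric-series bound $\sum_k e^{-\alpha_n kh}$ both degenerate. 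Second, in evaluating $\int_0^\infty e^{-\alpha_n t}\,\tilde F_n(dt)$ the cleaner route is not a split at $t=A\sqrt n$ but the Riemann-sum argument over blocks $[kh,(k+1)h)$: Stone's theorem (in its uniform-in-$x$ form, with the error $o(n^{-1/2})$ uniform over the compact family of tilted laws) gives $\tilde F_n([kh,(k+1)h))\le h\,(\sigma(\alpha_n)\sqrt{2\pi n})^{-1}+o(n^{-1/2})$ uniformly in $k$, and summing against $e^{-\alpha_n kh}$ and letting $h\to0$ after $n\to\infty$ yields the factor $1/\alpha_n$. With those clarifications, your identification of the exponent via $I(c+\gamma_n)=\alpha_n(c+\gamma_n)-\Lambda(\alpha_n)$ and the second-order Taylor expansion $I(c+\gamma_n)=\alpha(c+\gamma_n)-\Lambda(\alpha)+\tfrac{\gamma_n^2}{2\sigma^2(\alpha)}(1+O(|\gamma_n|))$ is correct, including the crucial point that the cubic Taylor error must be recorded as a \emph{relative} error inside the $\gamma_n^2$ term, since $n\gamma_n^2$ need not be bounded.
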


\begin{rem}{\rm
In \eqref{petrov-0}, we may have that $\sup \{ \b : \b \in dom (\Lambda) \} = \infty$ or ${\mathbb E} \left[ \log \,M \right] =-\infty$.  In these cases, the quantities
$\infty -\epsilon$ or  $-\infty -\epsilon$ should be interpreted as arbitrary positive, respectively negative, constants.}
\end{rem}

\subsection*{Acknowledgments}
The research was supported by the NCN under Grant DEC-2014/15/B/ST1/00060.


\begin{thebibliography}{10}
	
	\bibitem{AM} { G. Alsmeyer, S. Mentemeier, }
	\newblock {\em Tail behavior of
	stationary solutions of random difference equations: the case of
	regular matrices,} \newblock {Journal of Difference Equations and
Applications}  18(8) (2012), pp. 1305--1332.

\bibitem{BP}
{ P. Bougerol, N. Picard, }
\newblock {\em Strict stationarity of generalized autoregressive
processes,}
\newblock { Ann. Probab.}  20 (1992), pp. 1714--1730.

\bibitem{B}
{ A. Brandt, }
\newblock {\em The stochastic equation $Y_{n+1}=A_{n}Y_n+B_n $ with stationary coefficients,}
\newblock { Adv. Appl. Probab.}  18 (1986), pp. 211--220.

\bibitem{buraczewski:damek:2010}
{D. Buraczewski, E. Damek, }
\newblock {\em Regular behavior at infinity of
stationary measures of stochastic recursion on NA groups,}
\newblock { Colloq. Math.} 118 (2010), pp. 499--523. 


\bibitem{BD}
{ D. Buraczewski, E. Damek,}
\newblock {\em A simple proof of heavy tail estimates for affine type Lipschitz recursions,}
\newblock Stochastic Process. Appl. 127 (2017), pp. 657-668.


\bibitem{BDGHU}
{ D.~Buraczewski, E.~Damek, Y.~Guivarch, A.~Hulanicki and R.~Urban,}
\newblock {\em Tail-homo\-gene\-ity of stationary measures for some multidimensional
  stochastic recursions,}
\newblock { Probab. Theory Related Fields}, 145(3) (2009), pp. 385--420.


\bibitem{BDM}
{ D. Buraczewski, E. Damek, T. Mikosch,}
\newblock {\em Stochastic Models with Power-Law Tails. The Equation $X=_d AX+B$},
\newblock  Springer International Publishing, Switzerland 2016.


\bibitem{DK}
 { E.Damek, B. Kolodziejek,}
\newblock {\em Stochastic recursions: between Kesten's and Grey's assumptions,}
\newblock {submitted, https://arxiv.org/pdf/1701.02625.pdf}

\bibitem{DMS}{ E.Damek, M.Matsui, W.Swiatkowski,}
\newblock {\em Componentwise different tail solutions for bivariate stochastic recurrence equations - with application to GARCH(1,1) processes,}
\newblock {submitted, https://arxiv.org/abs/1706.05800.pdf } 


\bibitem{GGO}
L. Gerencser, G. Michaletzky, Z.Orlovits
\newblock{\em Stability of blocktriangular stationary random matrices}
\newblock Systems Cintrol Lett. 57 (8) (2008), pp. 620-625.

\bibitem{G}
{ C.M. Goldie,}
\newblock {\em Implicit renewal theory and tails of solutions of random equations,}
\newblock { Ann. Appl. Probab.}, 1(1) (1991), pp. 126--166.


\bibitem{Gre94}
{ D.~R. Grey,}
\newblock {\em Regular variation in the tail behavior of solutions of random
  difference equations,}
\newblock {Ann. Appl. Probab.}, 4(1) (1994), pp. 169--183.


\bibitem{GL1}{ Y.Guivarc'h, E. Le Page,} 
\newblock {\em Spectral gap properties for linear random walks
and Pareto's asymptotics for affine stochastic recursions,}
 { Ann. Inst. H. Poincar\'e,  Probab. Statist.}, 52 (2016), pp. 503-574.



\bibitem{K}
{ H.~Kesten, }
\newblock {\em Random difference equations and renewal theory for products of random
  matrices,}
\newblock { Acta Math.}, 131(1) (1973), pp. 207--248.

\bibitem{Kev}
{ P. Kevei,}
\newblock {\em A note on the Kesten-Grincevicius-Goldie theorem,}
\newblock {Electron. Commun. Prob.} 21(51), pp. 1--12, 2016.

\bibitem{KP}
{ C. Kl\"uppelberg, S. Pergamenchtchikov, }
\newblock {\em The tail
of the stationary distribution of a random coefficient AR$(q)$ model,}
\newblock{ Ann. Appl. Probab.}  14 (2004), pp. 971--1005. 

\bibitem{matsui:mikosch:2016}
{ M. Matsui, T. Mikosch, }
\newblock {\em The extremogram and the cross-extremogram for a bivariate
          GARCH process,} 
{Adv. Appl. Probab.}  48A (2016), pp. 217--233.

\bibitem{matsui:swiatkowski:2017}
{M. Matsui, W. Swiątkowski, }
\newblock {\em Tail indices for upper triangular matrices,}
{preprint}.



\bibitem{mikosch:starica:2000}
{T. Mikosch, C. St\u{a}ric\u{a},}
\newblock {\em Limit theory for the sample autocorrelations and
extremes of a GARCH(1,1) process,} 
{ Ann. Statist.}  28 (2000), pp. 1427--1451.


\bibitem{Mi}
{ M. Mirek,}
\newblock {\em Heavy tail phenomenon and convergence to stable laws for iterated Lipschitz maps,}
\newblock  Probab. Theory Related Fields  151(3-4) (2011), pp. 705--734.

\bibitem{Petrov}
{V.~Petrov,}
\newblock {\em On the probabilities of large deviations for sums of independent
  random variables,}
\newblock {Theory Probab. Appl. 10 (1965), pp. 287--298.}


\bibitem{Petrovbook}
{V.~Petrov,}
\newblock {\em Limit Theorems of Probability Theory}, Volume~4 of {Oxford
  Studies in Probability}, 1995.


\bibitem{S}
D. Straumann,
\newblock {\em Estimation in conditionally heteroscedasic time series models,}
\newblock Lecture Notes in Statistics 181, Springer-Verlag, Berlin, 2005.

\bibitem{V} { W.Vervaat,}
\newblock {\em On a stochastic difference equation and a representation
of non-ne\-ga\-ti\-ve infinitely divisible random variables,}
\newblock { Adv. Appl. Prob. 11 (1979), 750--783}.
\end{thebibliography}
\end{document}